\theoremstyle{definition}
\newtheorem{defni}{{Definition}}
\newtheorem{defn}{{Definition}}[section]
\newtheorem*{defn*}{{Definition}}
\newtheorem*{Q*}{{Question}}
\theoremstyle{plain}
\newtheorem{thm}[defn]{Theorem}
\newtheorem*{thm*}{Theorem}
\newtheorem{cor}[defn]{{Corollary}}
\newtheorem*{cor*}{Corollaire}
\newtheorem{prop}[defn]{{Proposition}}
\newtheorem*{prop*}{{Proposition}}
\newtheorem{lem}[defn]{{Lemma}}
\newtheorem*{claim}{{Claim}}
\newtheorem{fact}[defn]{{Fact}}
\theoremstyle{remark}
\newtheorem{rmq}[defn]{{Remark}}
\newtheorem*{exa}{{Examples}}
\title{On properties of (weakly) small groups}
\author{C\'e{d}ric Milliet}
\address[]{Universit\'e de Lyon, Universit\'e Lyon 1\newline
Institut Camille Jordan, UMR 5208 CNRS\newline
43 boulevard du 11 novembre 1918\newline
69622 Villeurbanne Cedex, France}%
\address[Current address]{Université Galatasaray\newline
Faculté de Sciences et de Lettres\newline
D\'epartement de Math\'ematiques\newline
Çira\u gan Caddesi n°36\newline
34357 Ortaköy, Istamboul, Turquie\newline
}
\email{milliet@math.univ-lyon1.fr}%
\keywords{Small group, weakly small group, Cantor-Bendixson rank, local chain condition, infinite abelian subgroup, group in a simple theory, infinite finite-by-abelian subgroup, nilpotent group.}
\subjclass[2000]{03C45, 03C60, 20E45, 20E99, 20F18, 20F24}
\thanks{The results of this paper form part of the author's doctoral dissertation, written in Lyon under the supervision of professor Frank O. Wagner. Many thanks to prof. Poizat for his enlightening remarks on the author's work, and to the anonymous referee for his careful readings and pointing out inaccuracies in the proofs.}
\begin{document}

\begin{abstract}A group is small if it has countably many complete $n$-types over the empty set for each natural number~$n$. More generally, a group $G$ is weakly small if it has countably many complete $1$-types over every finite subset of~$G$. We show here that in a weakly small group, subgroups which are definable with parameters lying in a finitely generated algebraic closure satisfy the descending chain conditions for their traces in any finitely generated algebraic closure. An infinite weakly small group has an infinite abelian subgroup, which may not be definable. A small nilpotent group is the central product of a definable divisible group with a definable one of bounded exponent. In a group with simple theory, any set of pairwise commuting elements is contained in a definable finite-by-abelian subgroup. First corollary : a weakly small group with simple theory has an infinite definable finite-by-abelian subgoup. Secondly, in a group with simple theory, a normal solvable group $A$ of derived length $n$ is contained in an $A$-definable almost solvable group of class $n$.\end{abstract}

\maketitle
A connected group of Morley rank $1$ is abelian \cite[Reineke]{Rei}. Better, in an omega-stable group, a definable connected group of minimal Morley rank is abelian. This implies that every infinite omega-stable group has a definable infinite abelian subgroup \cite[Cherlin]{Che1}. Berline and Lascar generalised this result to superstable groups in \cite{BL}. More recently, Poizat introduced \emph{$d$-minimal structures} (englobing minimal ones) and \emph{structures with finite Cantor rank} (including both $d$-minimal and finite Morley ranked structures). Poizat proved a $d$-minimal group to be abelian-by-finite \cite{dminimal}. He went further showing that an infinite group of finite Cantor rank has a definable abelian infinite subgroup \cite{Cantor}. More generally, we show in this paper that an infinite weakly small group has an infinite abelian subgroup, which may not be definable however.

We then turn to weakly small groups with a simple theory. Recall that an $\aleph_0$-categorical superstable group is abelian-by-finite \cite[Baur, Cherlin and Macintyre]{BCM}. In \cite{W2}, Wagner showed any small stable infinite group to have a definable infinite abelian subgroup of the same cardinality. Later on, Evans and Wagner proved that an $\aleph_0$-categorical supersimple group is finite-by-abelian-by-finite and has finite $SU$-rank \cite{EW}. We shall show that an infinite group the theory of which is small and simple has an infinite definable finite-by-abelian subgroup. However we still do not known whether a stable group must have an infinite abelian subgroup or not.

\begin{defni}A \emph{theory} is \emph{small} if it has countably many $n$-types without parameters for every natural number~$n$. A \emph{structure} is \emph{small} if its theory is so.\end{defni}

Note that smallness is preserved by interpretation, and by adding finitely many parameters to the language. Small theories arise when one wishes to count the number of pairwise non-isomorphic countable models of a complete first order theory in a countable language. If such a theory has fewer than the maximal number of pairwise non-isomorphic models, it is indeed small. Note that $\aleph_0$-categorical theories and omega-stable theories are small.

\begin{defni}\emph{(Belegradek)} A structure is \emph{weakly small} if it has countably many $1$-types over~$a$ for any finite tuple $a$ coming from the structure.\end{defni}

Weakly small structures were introduced by Belegradek to give a common generalisation of small and minimal structures. A weakly small $\aleph_0$-saturated structure is small.

\begin{defni}\emph{(Poizat \cite{dminimal})} An infinite structure is \emph{$d$-minimal} if any of its partitions has no more than $d$ infinite definable subsets.\end{defni}

Provided that its language be countable, a $d$-minimal structure is weakly small as there are at most $d$ non algebraic types over every finite parameter set, and fewer algebraic types than the countably many formulae. Note that weak smallness neither is a property of the theory, nor allows the use of compactness, nor guarantees that the set of $2$-types be countable. It allows arguments using formulae in one free variable only. Those formulae, the parameters of which lie in a fixed finite set, are ranked by the Cantor rank and degree.

\begin{exa}\emph{A non weakly small group.} Let $G$ be the sum over all prime numbers $p$ of cyclic groups of order $p$. For every set of prime numbers $P$, the type saying that "$x$ is $p$-divisible if and only if $p$ is in $P$" is finitely consistent. This produces as many complete types as there are sets of primes, preventing $G$ from being weakly small.

\emph{A non minimal, $d$-minimal group.} Recall that a minimal group is abelian \cite[Reineke]{Rei}, and a $d$-minimal group is abelian-by-finite \cite[Poizat]{dminimal}. Let $M$ be a minimal group, and~$F$ a finite group of order $d$. Any semi-direct product $M\rtimes F$ with a predicate interpreting $M$ will do.

\emph{A non $d$-minimal, non small, weakly small group.} Let $p$ be a prime, and~$G$ the sum over all natural numbers~$n$ of the cyclic groups of order $p^n$. The theory of~$G$ is the the theory of a  $\mathbf Z$-module, and eliminates quantifiers up to positive-prime formulae. So every definable subset of~$G$ is a boolean combination of cosets of subgroups of the form $p^nG$, or $p^nx=0$. This allows only countably many $1$-types over every finite subset, thus $G$ is weakly small. %As the subgroup $p^nG$ has index $p^n$ in $G$, the group $G$ is not $d$-minimal.
But Fact~\ref{gpabel} shows that it is not small.
\end{exa}

\section{The Cantor rank}

Given a structure $M$, a set $A$ of parameters lying inside $M$, and an $A$-definable subset $X$ of~$M$, we define the \emph{Cantor rank of~$X$ over~$A$} by the following induction~:\begin{itemize}\item[] $CB_A(X)\geq 0$ if $X$ is not empty,
\item[] $CB_A(X)\geq \alpha+1$ if there are infinitely many disjoint $A$-definable subsets of~$X$ having Cantor rank over~$A$ at least $\alpha$.
\item[] $CB_A(X)\geq \lambda$ for a limit ordinal $\lambda$, if $CB_A(X)$ is at least $\alpha$ for every $\alpha$ less than $\lambda$.\end{itemize}

If the structure is weakly small and if $A$ is a finite set, this transfinite process eventually stops, and~$X$ has an ordinal Cantor rank over~$A$. 

The \emph{Cantor rank $CB_A(p)$ of a complete $1$-type $p$ in $M$ over~$A$} is the least Cantor rank of the $A$-definable sets implied by $p$. It is also the derivation rank of~$p$ in the topological space $S_1(A)$ (sometimes plus $1$, depending on the definition taken for the Cantor-Bendixson rank).

The \emph{Cantor degree of~$X$ over~$A$} is the greatest natural number $d$ such that there is a partition of~$X$ into~$d$ $A$-definable sets having maximal Cantor rank over~$A$. We shall write $dCB_A(X)$ for this degree. It is also the number of complete types over~$A$ having maximal Cantor rank over~$A$.

%\begin{prop}Let $A$ be a finite set, $X$ an $A$-definable set in a small structure. The rank $CB_A(X)$ is an ordinal number, and~$dCB_A(X)$ is an integer.\end{prop}

%\begin{proof}If $X$ has infinite rank, one build a binary tree of~$A$-definable sets, producing continuum many types over~$A$, a contradiction. If the degree is infinite, there exists a partition of~$X$ in two $A$-definable sets, one of which has infinite degree. Iterating this process, one builds an infinite tree with an infinite branch, and infinitely many disjoint $A$-definable sets of maximal Cantor rank, a contradiction.\end{proof}

For a natural number~$n$, we say that a map is \emph{$n$-to-one} if it is surjective and if the cardinality of its fibres is bounded by~$n$. Definable $n$-to-one maps preserve the Cantor rank, and the degree variations can be bounded by the maximal size of the finite fibres~:

\begin{lem}\label{fibres}Let $X$ and~$Y$ be $A$-definable sets, and~$f$ an $A$-definable map from~$X$ to~$Y$. Then
\begin{enumerate} \item[$(1)$] If $f$ is onto, $CB_A(X)\geq CB_A(Y)$.\
\item[$(2)$] If $f$ has bounded fibres, $CB_A(Y)\geq CB_A(X)$.\
\item[$(3)$] If $f$ is $n$-to-one, then $X$ and~$Y$ have the same Cantor rank over~$A$, and $$dCB_A(Y)\leq dCB_A(X)\leq n\cdot dCB_A(Y)$$
\end{enumerate}\end{lem}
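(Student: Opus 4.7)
The plan is to prove all three parts by transfinite induction on Cantor rank, working in the Stone space $S_1(A)$ of complete $1$-types, so that the phrase ``$p$ lies in $Y$'' simply means that the defining formula of $Y$ belongs to~$p$. I first note that an $A$-definable map yields $A$-definable preimages and, by existential quantification, $A$-definable images, so all the relevant sets stay inside the realm where $CB_A$ is defined.

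For~$(1)$, I would induct on $\alpha = CB_A(Y)$. The only nontrivial case is the successor step: if $Y$ contains infinitely many disjoint $A$-definable pieces $Y_i$ with $CB_A(Y_i)\geq\alpha$, then the preimages $f^{-1}(Y_i)\subseteq X$ are $A$-definable and disjoint (since $f$ is a function), and the restriction $f\colon f^{-1}(Y_i)\twoheadrightarrow Y_i$ is onto, so the induction hypothesis of $(1)$ applied to each restriction gives $CB_A(f^{-1}(Y_i))\geq\alpha$. Hence $CB_A(X)\geq\alpha+1$. Limit ordinals are immediate.

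For~$(2)$, I would first reduce to the case where $f$ is onto by replacing $Y$ with $f(X)$ and using monotonicity of $CB_A$ under inclusion. Then induct on $\alpha = CB_A(X)$. At the successor step, take disjoint $A$-definable $X_i\subseteq X$ of rank~$\geq\alpha$, and set $Y_i = f(X_i)$; the induction hypothesis applied to the restrictions (whose fibres are still bounded by~$n$) yields $CB_A(Y_i)\geq\alpha$. The main obstacle is that the $Y_i$ need not be disjoint. The key observation is that the disjointness of the $X_i$ together with the fibre bound forces any intersection $Y_{i_1}\cap\cdots\cap Y_{i_{n+1}}$ of $n+1$ distinct images to be empty in~$M$: a common point would admit $n+1$ distinct preimages, one in each~$X_{i_j}$. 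Translated to $S_1(A)$, no type belongs to more than~$n$ of the clopen sets $[Y_i]$. Now suppose for contradiction that $CB_A(Y)\leq\alpha$; then $dCB_A(Y)$ is a finite integer~$k$, so there are only $k$ rank-$\alpha$ types inside $[Y]$. Each $[Y_i]$, being of rank exactly~$\alpha$, contains at least one of them; by pigeonhole some rank-$\alpha$ type $p$ lies in infinitely many $[Y_i]$, contradicting the $n$-bound.

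For~$(3)$, the equality of Cantor ranks is immediate from $(1)$ and~$(2)$. Write $\alpha$ for the common rank. For $dCB_A(Y)\leq dCB_A(X)$, pull back an optimal partition of $Y$ into $dCB_A(Y)$ $A$-definable pieces of rank~$\alpha$ via~$f^{-1}$; by~$(1)$ and~$(2)$ each preimage again has rank~$\alpha$, and the preimages are disjoint and $A$-definable, giving the bound. For $dCB_A(X)\leq n\cdot dCB_A(Y)$, push forward an optimal partition $X = X_1\sqcup\cdots\sqcup X_D$ of $X$ into $D=dCB_A(X)$ pieces of rank~$\alpha$; each $Y_i=f(X_i)$ has rank~$\alpha$ by~$(1)$ and~$(2)$, hence contains at least one of the $k=dCB_A(Y)$ rank-$\alpha$ types $p_1,\dots,p_k$ contained in~$Y$. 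By the $n$-bound from~$(2)$ applied to this disjoint family, each $p_j$ lies in at most $n$ of the $[Y_i]$. A double count gives $D\leq\sum_{i=1}^{D}\#\{j:p_j\in[Y_i]\}=\sum_{j=1}^{k}\#\{i:p_j\in[Y_i]\}\leq nk$, whence the inequality. The only real obstacle is handling the non-disjointness of forward images in~$(2)$, which is precisely what the type-theoretic reformulation dissolves.
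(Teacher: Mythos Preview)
Your proof is correct. Parts~(1) and the lower degree bound in~(3) match the paper exactly. For~(2) and the upper degree bound in~(3) you take a different route: the paper stays with definable sets throughout --- in~(2) it explicitly manufactures infinitely many pairwise disjoint subsets of~$Y$ of rank~$\geq\alpha$ by taking maximal-rank intersections among the $f(X_i)$ and iteratively trimming off small-rank overlaps, and for the upper bound in~(3) it reduces to the case $dCB_A(Y)=1$ and locates two full-rank subsets of~$Y$ with small-rank intersection --- whereas you pass to the Stone space and argue by pigeonhole and double counting on the finitely many rank-$\alpha$ types in~$[Y]$. Your argument is shorter and more conceptual once one accepts the identification of degree with the number of maximal-rank types; the paper's construction is more hands-on and never leaves the level of definable sets. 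The paper does later record a purely topological reformulation (continuous open $n$-to-one maps between Hausdorff spaces) that is kindred in spirit to your Stone-space viewpoint, though your counting argument is not spelled out there either.
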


\begin{rmq}The first two points appear for one-to-one maps together with the introduction of Morley's rank \cite[Theorem~2.3]{Mor}. Poizat extends them for $n$-to-one maps in the context of groups with finite Cantor rank \cite[Lemme 1]{Cantor} (independently to the author's work). To the author's knowledge, the result concerning the degree is new.\end{rmq}

\begin{proof}We may add $A$ to the language. For point one, we show inductively that $CB(X)$ is at least $CB(Y)$. If~$CB(Y)\geq \alpha+1$, there are infinitely many disjoint definable sets $Y_0,Y_1\dots$ in $Y$ of rank at least $\alpha$. Their pre-images are disjoint and have rank at least $\alpha$ by induction, so $CB(X)\geq\alpha+1$.

For point two, we show inductively that $CB(Y)$ is at least $CB(X)$. Suppose $CB(X)\geq\alpha +1$. In $X$, there are infinitely many disjoint definable sets $X_0,X_1,\dots$ of rank at least $\alpha$. As the fibres of~$f$ have cardinality at most~$n$ say, for every subset $I$ of~$\mathbf N$ of cardinality~$n+1$, the intersection $\bigcap_{i\in I}f(X_i)$ is empty. Thus there is a subset $J$ of~$\mathbf N$ of maximal finite cardinal with $0$ in $J$ such that~$\bigcap_{i\in J} f(X_i)$ has the same rank as $f(X_0)$. Put $Y_0=\bigcap_{i\in J} f(X_i)$. Iterating, one builds a sequence $Y_0,Y_1,\dots$ of definable sets such that the sets $Y_i$ and~$f(X_i)$ have the same Cantor rank and~$CB(Y_i\cap Y_j)<CB(Y_i)$ for all natural numbers $i\neq j$. Inductively, one may cut off a small ranked subset from every $Y_i$ and assume that they are pairwise disjoint. By induction hypothesis, the rank of every $Y_i$ is at least $\alpha$, so $CB(Y)\geq\alpha +1$.

For the third point, if $Y$ has degree $d$, then there is a partition of~$Y$ in definable sets $Y_1,\dots, Y_d$ with maximal rank. The pre-images of the sets $Y_i$ have maximal rank according to the first two points and form a partition of~$X$, so the degree $dCB(X)$ is at least $dCB(Y)$.

For the converse inequality, let $Y$ have degree $d$, and let $Y_1$ be a subset of~$Y$ of degree $1$. It is enough to show that $f^{-1}(Y_1)$ has degree at most~$n$. Suppose there are $n+1$~disjoint definable subsets $X_0,\dots,X_{n}$ of~$f^{-1}(Y_1)$ with maximal rank. As the fibres of~$f$ have no more that $n$~elements, the intersection $\bigcap_{i=0}^n f(X_i)$ is empty, so there is a proper minimal subset $I$ of~$\{0,\dots,n\}$ such that $\bigcap_{i\in I} f(X_i)$ has the same rank as $Y$. Thus, the intersection of~$\bigcap_{i\in I} f(X_i)$ and~$f(X_i)$ has small rank for every $i$ out of~$I$, and~$dCB(Y_1)$ is at least two, a contradiction.\end{proof}

\begin{rmq}In Lemma~\ref{fibres}(3), to deduce that $X$ and~$Y$ have the same Cantor rank, the fibres of~$f$ must be bounded, and not only finite. Consider for instance $Y$ to be the set of all natural numbers $\mathbf N$ together with the ordering, and~$X$ to be the set of pairs of natural numbers $(x,y)$ so that $y\leq x$. When projecting on the second coordinate, every fibre is infinite, so $CB_\mathbf{N}(X)=2$ ; when projecting on the first coordinate, the fibres are finite, but still $CB_\mathbf{N}(Y)=1$.\end{rmq}

Note that in the proof of Lemma~\ref{fibres}, one can weaken the definability assumption on $f$, and simply assume that the image and pre-image by $f$ of any definable set are definable. For instance, we easily get~:

\begin{lem}\label{aut}Let $M$ be a model, $X$ an $A$-definable subset of~$M$, and~$\sigma$ any automorphism of the structure $M$. Then $$CB_A(X)=CB_{\sigma (A)}(\sigma (X))$$\end{lem}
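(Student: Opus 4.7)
The claim is that an automorphism $\sigma$ of $M$ transports the Cantor rank, moving parameter sets along with it. My approach is to reduce this directly to the ideas in the proof of Lemma~\ref{fibres}, using the weakened hypothesis noted in the remark immediately preceding the lemma: we do not need $\sigma$ to be definable, only that $\sigma$ and its inverse send definable sets to definable sets.

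The key observation is the following definability-transport: if $Y \subseteq X$ is $A$-definable, say by a formula $\varphi(x,a)$ with parameters $a$ from $A$, then $\sigma(Y) = \{\sigma(y) : M \models \varphi(y,a)\}$. Since $\sigma$ is an automorphism, this is exactly $\{z : M \models \varphi(z,\sigma(a))\}$, so $\sigma(Y)$ is $\sigma(A)$-definable. Symmetrically, $\sigma^{-1}$ sends $\sigma(A)$-definable subsets of $\sigma(X)$ back to $A$-definable subsets of $X$. Thus $\sigma$ induces a bijection between the Boolean algebras of $A$-definable subsets of $X$ and of $\sigma(A)$-definable subsets of $\sigma(X)$, preserving inclusions, disjointness and finiteness.

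With this in hand, I would carry out a straightforward transfinite induction on $\alpha$, proving
\[
CB_A(X) \geq \alpha \ \Longleftrightarrow\ CB_{\sigma(A)}(\sigma(X)) \geq \alpha.
\]
The base case $\alpha=0$ reduces to $X \neq \emptyset \Leftrightarrow \sigma(X) \neq \emptyset$, which is clear. At a successor step, if $X_0,X_1,\dots$ are infinitely many pairwise disjoint $A$-definable subsets of $X$ of rank at least $\alpha$, then $\sigma(X_0),\sigma(X_1),\dots$ are infinitely many pairwise disjoint $\sigma(A)$-definable subsets of $\sigma(X)$, each of rank at least $\alpha$ by induction; the reverse direction is identical using $\sigma^{-1}$. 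Limit ordinals are trivial. This is really nothing more than replaying the argument of Lemma~\ref{fibres}(1) in both directions for the bijection $\sigma$.

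There is no genuine obstacle here; the only substantive point is the definability-transport observation, and once that is stated the induction is mechanical. This is precisely why the author introduces the lemma as an easy consequence of the remark on weakening definability in Lemma~\ref{fibres}.
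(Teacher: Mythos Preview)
Your proposal is correct and follows exactly the approach the paper indicates: the paper gives no explicit proof of Lemma~\ref{aut}, merely introducing it with the remark that in the proof of Lemma~\ref{fibres} one may weaken the definability assumption on $f$ to the condition that images and pre-images of definable sets be definable. Your definability-transport observation together with the transfinite induction is precisely what that remark is pointing at.
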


\begin{defn}Let $M$ be a structure, and $X$ an $acl(\emptyset)$-definable set in $M$. Let $\mathfrak{C}$ be a monster model extending $M$. We consider the finite union of the conjugates of $X(\mathfrak C)$ under the action of $Aut(\mathfrak{C})$. We write $\overline X$ for its intersection with $M$. Similarly, we define $\ring X$ to be the intersection of $M$ with the finite intersection of the conjugates of $X(\mathfrak C)$ under the action of $Aut(\mathfrak{C})$.\end{defn}

Note that neither $\overline X$ nor $\ring X$ depend on the choice of the monster model. $\overline X$ is a $\emptyset$-definable set containing $X$, whereas $\ring X$ is a $\emptyset$-definable subset of $X$.

If $A$ is a subset of~$B$ and~$X$ an $A$-definable set, then $CB_A(X)$ is less than or equal to~$CB_B(X)$. Note that the Cantor rank (respectively degree) of~$X$ over~$A$ or over the definable closure of~$A$ are the same. The Cantor rank over~$A$ also does not change when adding finitely many algebraic parameters to~$A$, and the degree variation can be bounded~:

\begin{lem}\label{acl}Let $X$ be a definable set without parameters, and let $a$ be an algebraic element of degree~$n$ over the empty set. Then
\begin{enumerate} \item[$(1)$]$CB_{a}(X)=CB_\emptyset(X)$
\item[$(2)$] $dCB_\emptyset(X)\leq dCB_{\overline a}(X)\leq n!\cdot dCB_\emptyset(X)$\end{enumerate}\end{lem}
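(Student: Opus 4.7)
For part (1), the inequality $CB_a(X)\geq CB_\emptyset(X)$ is immediate, since every $\emptyset$-definable partition is \emph{a fortiori} $a$-definable. I would prove the reverse by transfinite induction on $\alpha$, establishing for every $\emptyset$-definable $Y$ that $CB_a(Y)\geq\alpha$ implies $CB_\emptyset(Y)\geq\alpha$; the base and limit cases are trivial. At the successor step, suppose $CB_a(X)\geq\alpha+1$ and pick pairwise disjoint $a$-definable $X_0,X_1,\dots\subseteq X$ with $CB_a(X_i)\geq\alpha$. Writing $X_i=\phi_i(\mathfrak{C},a)$ and denoting the conjugates of $a$ by $a=a_1,\dots,a_n$, set
$$\tilde X_i \ = \ \phi_i(\mathfrak{C},a_1)\cup\cdots\cup\phi_i(\mathfrak{C},a_n).$$
Each $\tilde X_i$ is $\emptyset$-definable (being invariant under $Aut(\mathfrak{C})$), lies inside $X$ and contains $X_i$, so the induction hypothesis yields $CB_\emptyset(\tilde X_i)\geq\alpha$. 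The key observation is that every element lies in at most $n$ of the sets $\tilde X_i$: for each fixed $j$ the family $\{\phi_i(\mathfrak{C},a_j)\}_i$, being an automorphic image of the disjoint family $\{X_i\}_i$ by Lemma~\ref{aut}, is pairwise disjoint, so any point lies in $\phi_i(\mathfrak{C},a_j)$ for at most one $i$, and $j$ takes only $n$ values.

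Hence $\bigcap_{i\in I}\tilde X_i=\emptyset$ whenever $|I|>n$, which is precisely the combinatorial input behind the extraction argument of Lemma~\ref{fibres}(2): for each $i$ pick a maximal $J_i\ni i$ with $|J_i|\leq n$ such that $\tilde Y_{J_i}=\bigcap_{i'\in J_i}\tilde X_{i'}$ has rank $\geq\alpha$ over $\emptyset$, and note that maximality forces $\tilde Y_J\cap\tilde Y_{J'}$ to have rank $<\alpha$ whenever $J\neq J'$. Infinitely many distinct $J_i$ must appear (each contains $i$ and has bounded size), so after inductively subtracting the small-rank overlaps one extracts an infinite pairwise disjoint $\emptyset$-definable family in $X$ of rank $\geq\alpha$, witnessing $CB_\emptyset(X)\geq\alpha+1$. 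The delicate points are producing infinitely many distinct $\tilde Y_{J_i}$ and verifying that subtracting strictly smaller-rank subsets preserves rank $\geq\alpha$; both are routine via the standard fact that removing a set of strictly smaller Cantor rank leaves the rank unchanged.

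For part (2), $CB_{\overline a}(X)=CB_\emptyset(X)$ follows from (1) applied at each element of $\overline a$, and the inequality $dCB_\emptyset(X)\leq dCB_{\overline a}(X)$ is then immediate: any $\emptyset$-definable partition of $X$ into sets of maximal rank is also such a partition over $\overline a$. For the upper bound I would argue via the restriction map $r:S_1(\overline a)\to S_1(\emptyset)$: its fibres have size at most $n!$, since if $p_1,p_2\in S_1(\overline a)$ both restrict to $q$ and $b_i\models p_i$, then any $\sigma\in Aut(\mathfrak{C})$ with $\sigma(b_1)=b_2$ determines $p_2$ through $\sigma(\overline a)$, one of at most $n!$ permutations of the finite set $\overline a$. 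Part (1) then ensures that $r$ sends each maximal-rank type containing $X$ to a maximal-rank type, and that each such $q\in S_1(\emptyset)$ has at least one maximal-rank preimage (if every extension of $q$ to $\overline a$ were contained in a formula of rank $<CB_\emptyset(X)$, the $\emptyset$-definable disjunction of its $Aut(\mathfrak{C})$-conjugates would still have rank $<CB_\emptyset(X)$ by part (1) and Lemma~\ref{aut}, and would lie in $q$, contradicting $CB_\emptyset(q)=CB_\emptyset(X)$). This yields $dCB_{\overline a}(X)\leq n!\cdot dCB_\emptyset(X)$.
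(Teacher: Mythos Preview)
Your argument is correct and follows the same overall strategy as the paper: for (1), pass to the $\emptyset$-closures $\overline{X_i}$ of the witnesses and extract an infinite disjoint subfamily of rank $\geq\alpha$; for (2), use that restriction $S_1(\overline a)\to S_1(\emptyset)$ has fibres bounded by $n!$.

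A couple of differences are worth recording. In (1) the paper first disposes of the case $CB_a(X)=\infty$ by a separate type-counting argument (which is where the countable-language assumption enters), and only then runs the induction on an ordinal rank; your uniform transfinite induction on the statement ``$CB_a(Y)\geq\alpha\Rightarrow CB_\emptyset(Y)\geq\alpha$'' handles both cases at once and needs no countability hypothesis. Your combinatorial observation that each point lies in at most $n$ of the $\tilde X_i$---because for each fixed conjugate $a_j$ the family $\{\phi_i(\mathfrak C,a_j)\}_i$ is an automorphic image of the disjoint family $\{X_i\}_i$---is cleaner than the paper's iterative thinning, and feeds directly into the extraction of Lemma~\ref{fibres}(2). (A tiny quibble: the disjointness of $\{\phi_i(\mathfrak C,a_j)\}_i$ follows simply from $\sigma$ being a bijection, not from Lemma~\ref{aut}, which concerns rank.) For (2), your type-space argument is precisely the topological viewpoint the paper itself outlines in Remark~\ref{RT}; the paper's in-text proof instead runs a short direct contradiction on definable sets by looking at $\overline{X_1}$ and its complement. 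Both give the same bound, and your surjectivity remark (each maximal-rank $q$ has a maximal-rank extension) is correct but unnecessary for the upper inequality.
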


\begin{proof}We assume in the proof that the language is countable. However, this assumption is not necessary (see Remark \ref{RT}). For the first point, the Cantor rank of a set increases when one allows new calculation parameters, so $CB_{a}(X)$ is at least $CB_{\emptyset}(X)$. Conversely, let us show that $CB_\emptyset(X)$ is at least $CB_{a}(X)$. %If $CB_{a}(X)=\infty$, there is a least ordinal $\beta$ such that for all $\emptyset$-definable subset $X'$ of~$X$, having $CB_a(X')\geq\beta$ implies $CB_a(X')=\infty$. 
Suppose first that~$CB_a(X)=\infty$ holds. Then there must be $2^{\aleph_0}$ types over~$ \overline a$ in $X$. The restriction map from $S(X,\overline a)$ to $S(X,\emptyset)$ is $n!$-to-one. Indeed, if $x$ and $y$ have the same type over $\emptyset$, there is a monster model $\mathfrak{C}$ and an automorphism $\sigma$ of $\mathfrak{C}$ with $y=\sigma (x)$. If $q(x,\overline a)$ is the type of $x$ over $\overline a$, then $q(y,\sigma(\overline a))$ is the type of $y$ over $\overline a$. This shows that there are $2^{\aleph_0}$
types over~$\emptyset$ as well, which yields $CB_\emptyset(X)=\infty$.
%So, replacing $X$ successively by every $\emptyset$-definable subset of~$X$ having Cantor rank over~$a$ less than $\beta$, 
So, we may assume that $CB_a(X)$ is an ordinal. Let us suppose that $CB_a(X)=\alpha +1$ and that the result is proved for every $\emptyset$-definable set of~$CB_a$-rank $\alpha$. There are infinitely many disjoint $a$-definable subsets $X_i$ of~$X$, each of one having rank $\alpha$ over~$a$. By Lemma~\ref{aut} and induction hypothesis, for every $i$, the set $X_1$ and a conjugate of~$X_i$ have the same rank (computed over the set $\bar a$ of all conjugates of~$a$). So a conjugate of~$X_1$ intersects only finitely many $X_i$ in a set of maximal rank over~$\bar a$. One can take off these $X_i$, cut off a small ranked subset from the remaining $X_i$ and assume that the conjugates of~$X_1$ do not intersect any $X_i$. Iterating, one may assume that no conjugate of~$X_i$ intersects $X_j$ when $i$ differs from~$j$. %Let $\overline{X_i}$ denote the finite union of the conjugates of~$X_i$ under the action of the automorphisms group of the structure. 
By Lemma~\ref{aut} and induction hypothesis,
$$CB_a(X_i)=CB_{\bar a}(X_i)=CB_{\bar a}(\overline{X_i})=CB_\emptyset(\overline{X_i})=\alpha$$
As the sets $\overline{X_i}$ are disjoint, $CB_\emptyset(X)\geq \alpha+1$, so the first point is proved.
%Let $\overline{X_i}$ be the finite union of the conjugates of~$X_i$ under the action of the automorphisms group of he structure. This is a $\emptyset$-definable set. Put $\bar a$ the set of all conjugates of~$a$. By induction hypothesis $CB_{\bar a}(X_i)\geq\alpha$, so Lemma~\ref{aut} implies $CB_{\bar a}(\overline{X_i})\geq\alpha$. Thus $CB_\emptyset(\overline{X_i})$ is at least $\alpha$
%We claim that $\overline{X_i}$ must intersect only finitely many $\overline{X_i}$ in a set of maximal Cantor rank over~$\bar a$ (for otherwise, there would be a conjugate $a'$ of~$a$ such that $X_1$ intersect infinitely many $X_i(a')$ in a set of maximal Cantor rank over~$\bar a$. This would yield $CB_{\bar a}(X_1)>CB_{\bar a}(X_1) $..)

For the second point, we may assume that $X$ has degree $1$ over the empty set. Suppose that $X$ has degree at least~$n!+1$ over~$\overline a$. Let $X_1$ be an $\overline a$-definable subset of~$X$ with maximal rank over~$\overline a$ and degree $1$. The union $\overline{X_1}$ of its conjugates has degree at most~$n !$ over~$\overline a$, so $\overline{X_1}$ and its complement in $X$ both have maximal rank over~$\overline a$, hence over the empty set, a contradiction.\end{proof}

\begin{defn}We shall call \emph{local Cantor rank of~$X$ over~$acl(a)$} its Cantor rank over any parameter $b$ defining $X$ and having the same algebraic closure as $a$.\end{defn}

\begin{rmq}In Lemma \ref{acl}, if $b$ is another algebraic parameter, one may have $dCB_{a,b}(X)>dCB_a(X)$, so one need not have $CB_\emptyset(X)=CB_{acl(\emptyset)}(X)$. In fact, $CB_{acl(\emptyset)}(X)$ may not even be an ordinal. For instance, consider the unit circle $\mathbf S^1=\{x\in\mathbf C:|x|=1\}$ with a ternary relation $C(a,b,c)$ saying that $b$ lies on the shortest path joining $a$ to~$c$. We add algebraic unary predicates $A_1,A_2,\dots$ to the language, with $A_n=\{x\in \mathbf S^1:x^{2^n}=1\}$ for every natural number~$n$. This structure has $CB_\emptyset$-rank $0$, but infinite $CB_{acl(\emptyset)}$-rank.

%in the language consisting of binary relations $E_1,E_2,\dots$ and unary predicates $A_1,A_2,\dots$, consider the theory $T$ saying that each $E_i$ is an equivalence relation with infinitely many classes, each of them infinite, such that $E_1$ has one single class, and each $E_{i}$-class splits into two $E_{i+1}$-classes. For every~$i$, add in $T$ that the predicate $A_i$ is algebraic of cardinality~$i$, that the $A_i$'s are pairwise disjoint sets, and that every element in $A_i$ are in different $E_{i^2}$-classes. It is not difficult to see that there are countably many non-isolated $1$-types over~$A$ for every finite set $A$, so the structure is weakly small (in fact small even). However, there are $2^{\aleph_0}$ $1$-types over~$acl(\emptyset)$.
\end{rmq}

\begin{rmq}\label{RT}Lemmas \ref{fibres}, \ref{aut} and \ref{acl} are particular cases of a more general topological result. Let $X$ be any Hausdorff topological space, $X'$ his first Cantor derivative, and inductively on ordinals, let $X^{\alpha+1}$ stand for $(X^\alpha)'$. The \emph{Cantor-Bendixson rank of~$X$} is the least ordinal $\beta$ such that $X^\beta$ is empty and $\infty$ if there is no such $\beta$. Let us call a \emph{rough partition} of $X$, any covering of $X$ by open sets having maximal Cantor-Bendixson rank and small ranked pairwise intersections. The \emph{Cantor-Bendixson degree} of $X$ is the supremum cardinal $dCB(X)$ of the rough partitions of~$X$. Without compactness one could have $dCB(X)\geq\omega$. If $X$ was a compact space, one could equivalently define $CB(X)$ (which differs by $1$ from the previous definition) by the following induction~:\begin{itemize}\item[] $CB(X)\geq 0$ if $X$ is not empty,
\item[] $CB(X)\geq \alpha+1$ if there are infinitely many open subsets $O_1,O_2,\dots$ of~$X$ with $CB(O_i)\geq \alpha$ and $CB(O_i\cap O_j)<\alpha$ for all $i\neq j$.
\item[] $CB(X)\geq \lambda$ for a limit ordinal $\lambda$, if $CB(X)\geq\alpha$ for every $\alpha<\lambda$.\end{itemize}

As an analogue of Lemma~\ref{fibres}, replacing a "definable" set by an "open" set, and a "definable" map, by either a "continuous" map or an "open" one, we easily get~:

\begin{lem}\label{top}Let $X$ and~$Y$ be two Hausdorff topological spaces and let $f$ be a map from~$X$ onto~$Y$.\begin{enumerate}
\item If $f$ is open and onto, then $CB(X)\geq CB(Y)$.
\item If $f$ is continuous and has finite fibres, then $CB(Y)\geq CB(X)$.
\item If $f$ is a continuous, open, $n$-to-one, then $CB(X)=CB(Y)$ and $$dCB(Y)\leq dCB(X)\leq n\cdot dCB(Y)$$
\end{enumerate}\end{lem}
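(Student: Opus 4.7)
The plan is to mirror the proof of Lemma \ref{fibres} in the topological setting, but to exploit the equivalent iterated-derivative formulation $X^{(\alpha)}$ of the Cantor-Bendixson rank; this makes parts (1) and (2) pointwise inductions and dispenses with most of the bookkeeping on small-rank intersections.

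For part (1), I would show by transfinite induction on $\alpha$ that $f^{-1}(Y^{(\alpha)}) \subset X^{(\alpha)}$. Base and limit steps are trivial. At a successor, take $y \in Y^{(\alpha+1)}$ and any $x \in f^{-1}(y)$; induction gives $x \in X^{(\alpha)}$. If $x$ were isolated there, pick an open $U \ni x$ with $U \cap X^{(\alpha)} = \{x\}$; since $f$ is open, $f(U)$ is an open neighbourhood of $y$, and since $y$ is a limit point of $Y^{(\alpha)}$ there exists $y' \in (f(U) \cap Y^{(\alpha)}) \setminus \{y\}$, which lifts to some $x' \in U$ with $f(x') = y'$. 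The inductive hypothesis places $x'$ in $X^{(\alpha)}$, hence in $\{x\}$, forcing $y' = y$, a contradiction. Surjectivity then turns $Y^{(\alpha)} \neq \emptyset$ into $X^{(\alpha)} \neq \emptyset$, yielding $CB(X) \geq CB(Y)$. For part (2), the dual inclusion $f(X^{(\alpha)}) \subset Y^{(\alpha)}$ is proved analogously: if $x \in X^{(\alpha+1)}$ and $V$ is an open neighbourhood of $f(x)$, then $f^{-1}(V)$ is open by continuity and meets $X^{(\alpha)}$ in an infinite set; the fibre bound forces its image to be infinite inside $V \cap Y^{(\alpha)}$, so $f(x) \in Y^{(\alpha+1)}$.

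Part (3) combines (1) and (2) for the rank equality. The lower bound $dCB(Y) \leq dCB(X)$ follows by pulling back any rough partition of $Y$: preimages are open by continuity, cover $X$ by surjectivity, and the restriction of $f$ to each preimage inherits all the hypotheses, so both the maximal rank and the smallness of pairwise intersections transfer. For the upper bound $dCB(X) \leq n \cdot dCB(Y)$ I would copy the argument of Lemma \ref{fibres}(3): fix a maximal rough partition $\{Y_k\}$ of $Y$ and assume for contradiction that $f^{-1}(Y_k)$ contains $n+1$ open sets $X_0, \dots, X_n$ of maximal rank with small pairwise intersections; the images $f(X_i) \subset Y_k$ are open (openness of $f$) and of maximal rank (rank equality applied to each $f|_{X_i}$), while the fibre bound together with the small rank of every $X_i \cap X_j$ forces $\bigcap_{i=0}^{n} f(X_i)$ to have small rank. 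A minimal $I \subsetneq \{0, \dots, n\}$ for which $\bigcap_{i \in I} f(X_i)$ still has maximal rank then exhibits two open subsets of $Y_k$ of maximal rank with small intersection, witnessing $dCB(Y_k) \geq 2$, a contradiction.

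The main obstacle is this last step. In Lemma \ref{fibres} the $X_i$ were taken disjoint, so $\bigcap_i f(X_i) = \emptyset$ came for free from the fibre bound; here the $X_i$ only have small-rank pairwise overlaps, and one must argue separately that $\bigcap_{i=0}^{n} f(X_i) \subset \bigcup_{i<j} f(X_i \cap X_j)$ by a pigeon-hole count---any common value $y$ forces $n+1$ chosen lifts into at most $n$ fibre points, so two coincide in some $X_i \cap X_j$---and then bound the rank of each $f(X_i \cap X_j)$ via part (2) applied to the restriction $f|_{X_i \cap X_j}$.
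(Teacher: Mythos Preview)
The paper does not give a separate proof of this lemma; it simply asserts that Lemma~\ref{top} follows by replacing ``definable'' by ``open'' in the proof of Lemma~\ref{fibres} and leaves the translation to the reader. Your proposal supplies considerably more than that.

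Your treatment of (1) and (2) via the pointwise inclusions $f^{-1}(Y^{(\alpha)})\subset X^{(\alpha)}$ and $f(X^{(\alpha)})\subset Y^{(\alpha)}$ is a genuinely different route from what a literal transcription of Lemma~\ref{fibres} would give (that proof runs the induction on the ``infinitely many subsets of rank $\geq\alpha$'' characterisation). Your argument is cleaner in the purely topological setting: it works directly at the level of points, avoids the bookkeeping with small-rank overlaps, and makes immediately visible why \emph{finite} (not bounded) fibres suffice in (2), a point the paper only comments on after the fact.

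For (3) you correctly identify the extra wrinkle---the $X_i$ in a rough partition need not be disjoint, so $\bigcap_i f(X_i)=\emptyset$ is no longer free---and your pigeonhole fix ($n{+}1$ lifts into a fibre of size at most $n$ forces two to coincide, hence $\bigcap_i f(X_i)\subset\bigcup_{i<j}f(X_i\cap X_j)$) is exactly right. There is one slip at the very end: you invoke part~(2) to bound $CB\bigl(f(X_i\cap X_j)\bigr)$ from above, but part~(2) gives the opposite inequality $CB(\text{image})\geq CB(\text{domain})$. What you need is part~(1): the restriction $f|_{X_i\cap X_j}$ is open (because $X_i\cap X_j$ is open in $X$ and $f$ is open, so images of relatively open sets are open in $Y$, hence in $f(X_i\cap X_j)$) and onto its image, whence $CB(X_i\cap X_j)\geq CB\bigl(f(X_i\cap X_j)\bigr)$. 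With that correction the argument goes through.
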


To deduce Lemma~\ref{fibres} from Lemma~\ref{top}, we only need to pass from the category of definable sets to the category of topological spaces, and notice that an $A$-definable map $f$ from~$X$ to~$Y$ induces a continuous open map $\tilde{f}$ from the (compact) Hausdorff space of types~$S(X,A)$ to~$S(Y,A)$. Note that in Lemma~\ref{top}(2), the map need only have finite fibres to get preservation of the rank, whereas it needs to have bounded fibres in Lemma~\ref{fibres}(2). Note also that $f$ must have bounded fibres to ensure that $\tilde{f}$ have finite ones. For Lemma~\ref{acl}, consider any continuous equivalence relation $R$ on a Hausdorff topological space $X$, that is a relation such that the canonical map $X\rightarrow X/R$ is open. If every equivalence class of~$R$ has size at most some natural number~$n$, as $X/R$ is Hausdorff and as the map $X\rightarrow X/R$ is also continuous by definition, it follows from Lemma~\ref{top} that $CB(X)=CB(X/R)$ and the inequalities $dCB(X)\leq dCB(X/R)\leq n\cdot dCB(X)$ hold. Let $M$ be any first order structure, $a$ an algebraic parameter of degree $n$, and $\mathfrak C$ a monster model extending $M$. Applied to the space of types over $\overline a$, modulo the equivalence relation "to be conjugated under the action of~$Aut(\mathfrak C)$", the latter yields %a slightly weaker version of 
Lemma~\ref{acl}.% Namely, we get $CB_{\overline a}(X)=CB_\emptyset(X)$ (hence \emph{a fortiori} $CB_a(X)=CB_\emptyset(X)$), and $dCB_\emptyset(X)\leq dCB_{\overline a}(X)\leq n!\cdot dCB_\emptyset(X)$.
\end{rmq}

\section{General facts about weakly small groups}

As an immediate corollary of Lemma~\ref{fibres} we obtain a result of Wagner~:

\begin{cor}\emph{(Wagner \cite{W2})} If $f$ is a definable group homomorphism of a weakly small group $G$, the kernel of which has at most~$n$ elements, then $f(G)$ has index at most~$n$ in $G$.\end{cor}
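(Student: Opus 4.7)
The plan is to proceed by contradiction, suppose $[G:f(G)] \geq n+1$, and combine Lemma~\ref{fibres}(3) applied to $f$ with the same lemma applied to left translations by coset representatives, so as to derive contradictory bounds on the Cantor degree of $G$.

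First I would note that $f(G)$ is a subgroup of $G$ (as $f$ is a homomorphism) and is definable over any finite parameter set $A$ defining $f$. Assuming for contradiction the existence of $n+1$ pairwise distinct cosets of $f(G)$, I pick representatives $g_0, \dots, g_n$ and work over the finite set $B = A \cup \{g_0, \dots, g_n\}$. Weak smallness then ensures that $G$, $f(G)$ and each coset $g_i f(G)$ carry ordinal Cantor rank and finite Cantor degree over $B$.

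The restriction $f : G \to f(G)$ is a $B$-definable surjection whose fibres all have size $|\ker f| \leq n$, hence it is $n$-to-one in the sense of the paper. Lemma~\ref{fibres}(3) therefore gives
\[
CB_B(G) = CB_B(f(G)) \quad \text{and} \quad dCB_B(G) \leq n \cdot dCB_B(f(G)).
\]
On the other hand, for each $i$, left multiplication by $g_i$ is a $B$-definable bijection from $f(G)$ onto $g_i f(G)$, so Lemma~\ref{fibres}(3) (with $n=1$) yields $CB_B(g_i f(G)) = CB_B(f(G)) = CB_B(G)$ and $dCB_B(g_i f(G)) = dCB_B(f(G))$. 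The $n+1$ cosets are pairwise disjoint $B$-definable subsets of $G$, each of maximal Cantor rank and of common degree $dCB_B(f(G))$, so their degrees sum inside $G$:
\[
dCB_B(G) \geq (n+1) \cdot dCB_B(f(G)).
\]
Since $f(G)$ is nonempty, $dCB_B(f(G)) \geq 1$, and the two inequalities together are incompatible, giving the required contradiction.

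I do not expect any real obstacle; the only point to handle carefully is the choice of the parameter set $B$, which must be enlarged enough to make every coset definable, yet remain finite so that the Cantor rank and degree machinery of Section~1 apply.
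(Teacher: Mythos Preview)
Your proof is correct and follows essentially the same approach as the paper's: both argue by contradiction, pass to a finite parameter set over which $n+1$ cosets of $f(G)$ are definable, and then confront the lower bound $dCB_B(G)\geq (n+1)\cdot dCB_B(f(G))$ with the upper bound $dCB_B(G)\leq n\cdot dCB_B(f(G))$ coming from Lemma~\ref{fibres}(3). Your write-up simply makes explicit the intermediate steps (that translations preserve rank and degree, and that weak smallness ensures these invariants are ordinal-valued) which the paper leaves implicit.
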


\begin{proof}Otherwise, one can find a finite tuple $a$ over which at least~$n+1$ cosets of~$f(G)$ are definable, so $G$ has degree over~$a$ at least $(n+1)\cdot dCB_a(f(G))$, a contradiction with Lemma~\ref{fibres}(3).\end{proof}

\begin{cor}\label{conj}In a weakly small group, there are at most $n$~conjugacy classes of elements the centraliser of which has order at most~$n$.\end{cor}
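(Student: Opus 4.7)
The plan is to mimic the preceding corollary by using Lemma~\ref{fibres}(3) applied to the conjugation maps, and then derive a contradiction by counting Cantor degrees of pairwise disjoint orbits.

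Suppose for contradiction that there exist $n+1$ pairwise distinct conjugacy classes $C_1,\dots,C_{n+1}$ whose representatives $g_1,\dots,g_{n+1}$ each satisfy $|C_G(g_i)|\leq n$. Let $a$ denote the finite tuple $(g_1,\dots,g_{n+1})$. Each orbit $C_i=\{x^{-1}g_ix:x\in G\}$ is $a$-definable, and the conjugation map
$$\phi_i:G\longrightarrow C_i,\qquad x\longmapsto x^{-1}g_ix,$$
is an $a$-definable surjection whose fibres are the left cosets of $C_G(g_i)$, hence of size at most $n$. Thus each $\phi_i$ is $m_i$-to-one for some $m_i\leq n$.

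By Lemma~\ref{fibres}(3) applied to $\phi_i$, we obtain $CB_a(C_i)=CB_a(G)$ and
$$dCB_a(G)\leq n\cdot dCB_a(C_i),\qquad\text{i.e.}\qquad dCB_a(C_i)\geq \frac{dCB_a(G)}{n}.$$
So each $C_i$ is an $a$-definable subset of $G$ of \emph{maximal} Cantor rank over $a$.

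Since the $C_i$ are pairwise disjoint $a$-definable subsets of $G$, all of maximal Cantor rank, the definition of the Cantor degree forces
$$\sum_{i=1}^{n+1}dCB_a(C_i)\leq dCB_a(G).$$
Combining with the lower bound gives $(n+1)\cdot dCB_a(G)/n\leq dCB_a(G)$, hence $n+1\leq n$ (recall $dCB_a(G)\geq 1$), a contradiction. There is no genuine obstacle here: once the conjugation map is recognised as an $n$-to-one $a$-definable surjection, the corollary falls out of Lemma~\ref{fibres}(3) in the same spirit as the previous one, the only mildly delicate step being the simultaneous bookkeeping of the ranks and degrees of all $n+1$ orbits with a single parameter set $a$.
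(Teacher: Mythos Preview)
Your proof is correct and follows essentially the same approach as the paper's: both argue by contradiction, pick a finite tuple of representatives over which the $n+1$ classes are definable, apply Lemma~\ref{fibres}(3) to the conjugation maps to see that each class has maximal Cantor rank and degree at least $dCB_a(G)/n$, and derive a contradiction from the degree count. You have simply spelled out the details (the explicit choice of $a$, the map $\phi_i$, and the degree additivity over disjoint sets) that the paper leaves implicit.
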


\begin{proof}Otherwise, let us pick $n+1$~conjugacy classes $C_1,\dots, C_{n+1}$ of elements the centraliser of which has order at most~$n$, and choose a finite tuple $a$ over which these classes are definable. According to Lemma~\ref{fibres}, each class $C_i$ has maximal Cantor rank over~$a$ and degree at least $dCB_a(G)/n$, a contradiction.\end{proof}

For any set $X$ definable in an omega-stable group, one can define the stabiliser of~$X$ up to some small Morley ranked set. In a weakly small group, we can define a local stabiliser up to some set of small local Cantor rank, where local means "in a finitely generated algebraic closure". We write $A\Delta B$ for the symmetric difference of two sets $A$ and $B$.

\begin{defn}Let $X$ be a definable set without parameters in a weakly small group $G$, and let $\Gamma$ stand for the algebraic closure of a finite tuple $g$ in $G$. One defines the \emph{local almost stabiliser} of~$X$ in $\Gamma$ to be $$Stab_\Gamma(X)=\{x\in \Gamma: CB_{x,g}(x X\Delta X)<CB_g(X)\}$$
For any subroup $\delta$ of~$\Gamma$, we shall write $Stab_\delta(X)$ for $Stab_\Gamma(X)\cap\delta$.\end{defn}

\begin{cor}\label{subnorm}$Stab_\Gamma(X)$ is a subgroup of~$\Gamma$. If $X$ is invariant by conjugation under elements of~$\Gamma$, then $Stab_\Gamma(X)$ is normal in $\Gamma$.\end{cor}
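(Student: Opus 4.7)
The plan is to verify the three subgroup axioms (identity, closure, inverse) and then the conjugation invariance needed for normality, each by reducing to a Cantor rank inequality. Two facts drive the whole argument. First, left multiplication or conjugation by any $x\in\Gamma$ is an $\{x\}$-definable bijection, so preserves local Cantor rank by Lemma~\ref{fibres}(3). Second, every element of $\Gamma$ is algebraic over $g$, so Lemma~\ref{acl} tells us that enlarging or shrinking the base of computation by elements of $\Gamma$ leaves the local Cantor rank unchanged. Together, these let me evaluate the local Cantor rank of any $\Gamma$-definable set over whichever of the parameter sets $\{g\}$, $\{x,g\}$, $\{xy,g\}$, $\{x,y,g\}$ is convenient, and translate freely by elements of $\Gamma$.

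For the subgroup part, the identity is immediate since $eX\Delta X=\emptyset$. For closure, given $x,y\in Stab_\Gamma(X)$, I would exploit the set-theoretic inclusion
\[
xyX\,\Delta\, X\;\subseteq\; x(yX\,\Delta\, X)\cup(xX\,\Delta\, X),
\]
together with the fact that the Cantor rank of a union equals the max of the ranks. The piece $xX\Delta X$ is controlled directly by $x\in Stab_\Gamma(X)$, and for $x(yX\Delta X)$ left-translation invariance of the rank reduces it to $yX\Delta X$, controlled by $y\in Stab_\Gamma(X)$. For the inverse, the identity $x(x^{-1}X\Delta X)=xX\Delta X$ combined with translation invariance transfers the rank bound from $xX\Delta X$ to $x^{-1}X\Delta X$.

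For normality, assume $hXh^{-1}=X$ for every $h\in\Gamma$. Substituting this back and pulling the conjugation outside the symmetric difference yields
\[
(hxh^{-1})X\,\Delta\, X\;=\;h\bigl(xX\,\Delta\, X\bigr)h^{-1},
\]
so invariance of the local rank under conjugation reduces it to the rank of $xX\Delta X$, which is less than $CB_g(X)$ by the hypothesis on $x$; hence $hxh^{-1}\in Stab_\Gamma(X)$.

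I do not foresee a serious obstacle; the computation mirrors the classical stabiliser argument for the generic type of an $\omega$-stable group, with the local Cantor rank playing the role of Morley rank. The only delicate point is tracking the various parameter sets over which the different Cantor ranks are computed, and that is exactly what the local-rank formalism, buttressed by Lemmas~\ref{fibres} and~\ref{acl}, is designed to absorb.
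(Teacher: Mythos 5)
Your proof is correct and follows essentially the same route as the paper: the paper performs the one-step subgroup test, observing that $aX$, $bX$ and $X$ share the same types of maximal rank over $g,a,b$ so that $CB_{g,a,b}(aX\Delta bX)<CB_g(X)$, and then translates by $b^{-1}$ --- the same combination of the symmetric-difference triangle inequality, translation-invariance of the local rank, and invariance under adding algebraic parameters that you use. Your explicit normality computation via $(hxh^{-1})X\Delta X=h\bigl(xX\Delta X\bigr)h^{-1}$ merely writes out a step the paper leaves to the reader.
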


\begin{proof}Let $a$ and~$b$ be in $Stab_\Gamma(X)$. The sets $X$, $aX$ and~$bX$ have the same types of maximal rank computed over~$g,a,b$, so $CB_{g,a,b}(a X\Delta bX)$ is smaller than $CB_g (X)$. As the rank is preserved under definable bijections, and when adding algebraic parameters, we have$$CB_{g,a,b}(a X\Delta bX)=CB_{g,a,b}(b^{-1}a X\Delta X)=CB_{g,b^{-1}a}(b^{-1}a X\Delta X)$$ so $b^{-1}a$ belongs to~$Stab_\Gamma(X)$.
\end{proof}

Recall that for a definable generic set $X$ of an omega-stable group $G$, the stabiliser of~$X$ has finite index in $G$. For a weakly small group, we have a local version of this fact~:

\begin{prop}\label{fini}Let $G$ be a weakly small group, $g$ a finite tuple of~$G$, and~$X$ a $g$-definable subset of~$X$. If $\delta$ is a subgroup of~$dcl(g)$ and if $X$ has maximal Cantor rank over~$g$, then $Stab_\delta(X)$ has finite index in $\delta$.\end{prop}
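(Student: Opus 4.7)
The plan is a pigeonhole argument on the left translates $aX$, as $a$ ranges over representatives of the cosets of $Stab_\delta(X)$ in $\delta$. The feature of the setup that makes this possible is that $\delta\subseteq dcl(g)$, so every translate $aX$ with $a\in\delta$ remains $g$-definable and can thus be measured inside the single fixed type space $S_1(g)$. The whole argument will take place at rank $\alpha := CB_g(X)=CB_g(G)$, the last equality coming from the maximal rank hypothesis on $X$.

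First I would fix $\alpha$ and the degree $d := dCB_g(G)$, together with the list $p_1,\dots,p_d$ of complete $1$-types over $g$ of Cantor rank $\alpha$ (these exist and are in bijection with an optimal partition, by the alternative description of $dCB_g$ given in the definitions). By weak smallness $\alpha$ is an ordinal; that $d$ is finite is the one point needing non-trivial justification, and it follows from the transfinite definition of Cantor rank: infinitely many pairwise disjoint $g$-definable subsets of rank $\alpha$ would force $CB_g(G)\geq\alpha+1$, contradicting maximality.

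Next, suppose toward a contradiction that $Stab_\delta(X)$ has infinite index in $\delta$, and pick elements $a_1,a_2,\dots$ of $\delta$ lying in pairwise distinct cosets. Since each $a_i\in dcl(g)$, each translate $a_iX$ is $g$-definable, and Lemma~\ref{fibres} applied to the $g$-definable bijection $y\mapsto a_iy$ gives $CB_g(a_iX)=\alpha$. I then associate to $a_i$ the subset $S_i=\{k:p_k\in a_iX\}\subseteq\{1,\dots,d\}$, which is non-empty because $a_iX$ has maximal rank. There are only $2^d$ such subsets, so pigeonhole yields indices $i\neq j$ with $S_i=S_j$; then $a_iX\Delta a_jX$ contains none of the $p_k$, and hence $CB_g(a_iX\Delta a_jX)<\alpha$.

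Finally, translating by $a_i^{-1}$ (another $g$-definable bijection, so rank-preserving) gives $CB_g(X\Delta(a_i^{-1}a_j)X)<\alpha$. Since $a_i^{-1}a_j\in dcl(g)$, adding it to the base of parameters does not affect the Cantor rank of a $g$-definable set, so $CB_{g,a_i^{-1}a_j}\bigl((a_i^{-1}a_j)X\Delta X\bigr)<CB_g(X)$. This places $a_i^{-1}a_j$ in $Stab_\Gamma(X)\cap\delta=Stab_\delta(X)$, contradicting the choice of $a_i,a_j$ in distinct cosets. The only delicate step is the finiteness of $d$; the rest of the argument is essentially forced by the definition of $Stab_\Gamma(X)$ together with the preservation of Cantor rank under $g$-definable bijections and under the addition of definable-closure parameters.
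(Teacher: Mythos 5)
Your proof is correct and takes essentially the same route as the paper's: both are pigeonhole arguments on the set of maximal-rank (generic) types over $g$ contained in each $g$-definable translate $aX$, $a\in\delta$, using that two translates with the same generic types have small-rank symmetric difference and hence lie in the same coset of the almost stabiliser. The paper merely records the sharper bound $\binom{m}{l}$ on the index (each translate contains exactly $l=dCB_g(X)$ of the $m$ generic types), where you use the cruder but equally sufficient bound $2^d$.
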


\begin{proof}Let $m$ and~$l$ be the degree over $g$ of~$G$ and~$X$ respectively. In $G$, there are $m$ types of maximal rank over $g$ which we call its \emph{generic types over $g$}. Thus, for translates of~$X$ by elements of~$\delta$, there are at most $C_m^l$ choices for their generic types. If one chooses $C_m^l+1$ cosets of~$X$, at least two of them will have the same generic types.\end{proof}

Weakly small groups definable over a finitely generated algebraic closure satisfy a local descending chain condition~:

\begin{lem}\label{findex}Let $G$ be a weakly small group, and~$H_2\leq H_1$ two subgroups of~$G$ definable without parameters.\begin{enumerate}
\item If $H_2\cap acl(\emptyset)$ is properly contained in $H_1\cap acl(\emptyset)$, then either $CB(H_2)<CB(H_1)$, or $dCB(H_2)<dCB(H_1)$.
\item If $H_1$ and~$H_2$ have the same Cantor rank, then $H_2\cap acl(\emptyset)$ has finite index in $H_1\cap acl(\emptyset)$.\end{enumerate}\end{lem}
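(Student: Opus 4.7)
My plan is to derive both parts from Lemma \ref{acl}(1)---which equates Cantor ranks over $\emptyset$ and over algebraic parameters---combined with the finiteness of $Aut(\mathfrak{C})$-orbits inside $acl(\emptyset)$. Writing $K_i = H_i \cap acl(\emptyset)$, I would establish (1) in the equivalent form: if $K_2 \subsetneq K_1$ and $CB(H_1) = CB(H_2)$, then $dCB(H_2) < dCB(H_1)$. For (2) I would argue by contradiction, using $Aut$-closure to produce infinitely many disjoint $\emptyset$-definable subsets of $H_1$ of maximal rank.

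For (1), I would pick $a \in K_1 \setminus K_2$ and consider $aH_2$, an $a$-definable coset contained in $H_1$ and disjoint from $H_2$ (since $a \notin H_2$), so $aH_2 \subseteq H_1 \setminus H_2$. Translation by $a$ being an $a$-definable bijection, $CB_a(aH_2) = CB_a(H_2)$, and Lemma \ref{acl}(1) identifies this with $CB_\emptyset(H_2)$; call this common value $\alpha$. Then $CB_a(H_1 \setminus H_2) \geq \alpha$, and since $H_1 \setminus H_2$ is $\emptyset$-definable, a second application of Lemma \ref{acl}(1) yields $CB_\emptyset(H_1 \setminus H_2) \geq \alpha = CB_\emptyset(H_1)$. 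Since $H_1$ splits as the disjoint union of the $\emptyset$-definable sets $H_2$ and $H_1 \setminus H_2$, both of maximal rank, I conclude $dCB_\emptyset(H_1) \geq dCB_\emptyset(H_2) + 1$, as required.

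For (2), I would suppose for contradiction that $[K_1 : K_2] = \infty$. Both $K_1$ and $K_2$ being $\emptyset$-invariant, $Aut(\mathfrak{C})$ acts on $K_1/K_2$ by $\sigma \cdot (aK_2) = \sigma(a) K_2$, and every orbit is finite because $aK_2$ is represented by an algebraic element. With infinitely many orbits available, I would pick representatives $a_1, a_2, \ldots \in K_1$ of distinct $Aut$-orbits in $K_1/K_2$, and form $\overline{a_i H_2} = \bigcup_{\sigma \in Aut(\mathfrak{C})} \sigma(a_i) H_2$, a finite, $\emptyset$-definable union of cosets of $H_2$ contained in $H_1$. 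The key verification is pairwise disjointness: if $\sigma(a_i) H_2 \cap \tau(a_j) H_2 \neq \emptyset$ then $\sigma(a_i)^{-1} \tau(a_j) \in H_2 \cap acl(\emptyset) = K_2$, giving $\sigma(a_i) K_2 = \tau(a_j) K_2$, contrary to the choice of distinct orbits. Each $\overline{a_i H_2}$ is $\emptyset$-definable of Cantor rank $\alpha = CB(H_2)$ (by Lemma \ref{acl}(1)), so infinitely many such sets disjoint inside $H_1$ would give $CB_\emptyset(H_1) \geq \alpha + 1$, contradicting $CB(H_1) = \alpha$.

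The subtle point I expect is the orbit argument in (2): the $Aut$-action must be taken on the coset space $K_1/K_2$ rather than on $K_1$, since distinct $Aut$-orbits of elements of $K_1$ can collapse to the same coset modulo $K_2$; and the separation of cosets after $Aut$-closure relies on the identity $H_2 \cap acl(\emptyset) = K_2$, which is what lets disjointness in $K_1/K_2$ propagate back to disjointness of the corresponding unions of cosets of $H_2$ in $H_1$.
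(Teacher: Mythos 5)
Your proof is correct and follows essentially the same route as the paper: both parts rest on Lemma~\ref{acl}(1) together with the finiteness of the $Aut(\mathfrak C)$-orbit of an algebraic coset $aH_2$, whose $\emptyset$-definable union $\overline{aH_2}$ supplies disjoint maximal-rank pieces inside $H_1$. The only cosmetic differences are that for (1) you use the complement $H_1\setminus H_2$ where the paper uses $\overline{aH_2}$ itself, and for (2) you conclude via $CB(H_1)\geq\alpha+1$ rather than by bounding the number of possible $\overline{aH_2}$ by the degree; your explicit disjointness check via $H_2\cap acl(\emptyset)=K_2$ is exactly what the paper leaves implicit.
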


\begin{proof}If $b$ is an element of~$acl(\emptyset)$ in $H_1\setminus H_2$, the set $\overline{bH_2}$ %of conjugates of~$b.H_2$ under $Aut(G)$ 
is definable without parameters, and is disjoint from~$H_2$. This proves the first point. If $H_1$ and~$H_2$ have the same Cantor rank, one has $$CB(H_2)=CB_b(H_2)=CB_b(bH_2)=CB_b(\overline{bH_2})=CB(\overline{bH_2})$$ It follows that $CB(\overline{bH_2})$ is maximal in $H_1$, so there must be only finitely many choices for $\overline{bH_2}$, and thus for $bH_2$.\end{proof}

\begin{thm}\label{dccm}In a weakly small group, the trace over~$acl(\emptyset)$ of a descending chain of~$acl(\emptyset)$-definable subgroups becomes stationary after finitely many steps.\end{thm}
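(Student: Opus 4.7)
The plan is to proceed by contradiction: assume the traces $L_i=H_i\cap acl(\emptyset)$ strictly decrease at every step, and derive an absurdity. For each $i$ pick a finite tuple $a_i\in acl(\emptyset)$ defining $H_i$ and a witness $b_i\in L_i\setminus L_{i+1}$. My strategy is to extract from the chain a strictly decreasing lexicographic invariant built from the local Cantor rank and the Cantor degree, and to conclude by well-foundedness.

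First I would extend the local Cantor rank from $\emptyset$-definable to $acl(\emptyset)$-definable sets. Expansion of the language by $a_i$ preserves weak smallness and does not enlarge the algebraic closure of $\emptyset$, so $H_i$ becomes $\emptyset$-definable while Lemma~\ref{acl}(1) shows $CB_{a_i}(H_i)$ to be independent of the choice of defining tuple; this yields a well-defined ordinal invariant $r_i$. Monotonicity of the Cantor rank under inclusion forces $r_1\ge r_2\ge\cdots$, and well-foundedness of the ordinals makes this sequence eventually constant. Truncating, I may assume $r_i=r$ for every $i$.

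Next I would apply Lemma~\ref{findex}(1) pairwise. Fix $n$ and let $c$ be a finite tuple in $acl(\emptyset)$ containing $a_1,\dots,a_n$ together with $b_1,\dots,b_{n-1}$. In the expansion by $c$, all of $H_1,\dots,H_n$ are $\emptyset$-definable of common Cantor rank $r$ (by Lemma~\ref{acl}(1) again), and each $b_i$ is an algebraic witness to $L_{i+1}\subsetneq L_i$. Lemma~\ref{findex}(1) then forces
\[dCB_c(H_1)>dCB_c(H_2)>\cdots>dCB_c(H_n),\]
so in particular $dCB_c(H_1)\ge n$.

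The hard part will be turning this unbounded growth into a contradiction. Lemma~\ref{acl}(2) yields only a bound of the form $dCB_c(H_1)\le\deg_\emptyset(c)!\cdot dCB_\emptyset(\overline{H_1})$, and $\deg_\emptyset(c)$ itself grows with $n$, so the pair (rank, degree over a fixed finite parameter) does not suffice as a global invariant. I expect the actual argument to use a more intrinsic invariant, perhaps by first applying Lemma~\ref{findex} to the auxiliary $\emptyset$-definable chain of normal cores $\ring H_i=\bigcap_\sigma H_i^\sigma$---which stabilises after finitely many steps as a chain of $\emptyset$-definable subgroups---and then using the finiteness of the Galois orbit of each $H_i$ to transfer stabilisation from $\ring H_i\cap acl(\emptyset)$ back to $H_i\cap acl(\emptyset)$ itself.
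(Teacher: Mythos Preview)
Your final paragraph is exactly the paper's route; the detour through degrees over an ever-growing tuple $c$ is a dead end (as you yourself note), and you should simply drop it. What remains to be made precise is the ``transfer'' step, and here the key input is Lemma~\ref{findex}(2), not merely the finiteness of the Galois orbit.

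Concretely: once the local Cantor ranks stabilise at value $r$ from some index $n$ on, Lemma~\ref{findex}(2) already gives that each $H_i\cap acl(\emptyset)$ has \emph{finite index} in $H_n\cap acl(\emptyset)$ for $i\ge n$. Name an algebraic tuple defining $H_n$, so that $H_n$ becomes $\emptyset$-definable (Lemma~\ref{acl}(1) shows this is harmless). Now the cores $\ring H_i$ for $i\ge n$ form a descending chain of genuinely $\emptyset$-definable subgroups of $H_n$, so Lemma~\ref{findex}(1) applies with a fixed parameter set: each strict drop of $\ring H_i\cap acl(\emptyset)$ forces a drop of $(CB_\emptyset,dCB_\emptyset)$, and this lexicographic invariant is well-founded. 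Hence $\bigcap_{i\ge n}\ring H_i\cap acl(\emptyset)$ equals $\ring H_N\cap acl(\emptyset)$ for some finite $N$. This last group is contained in every $H_i$, and---since $\ring H_N$ is a finite intersection of conjugates of $H_N$, each of local rank $r$---Lemma~\ref{findex}(2) shows it has finite index in $H_n\cap acl(\emptyset)$. The chain $H_i\cap acl(\emptyset)$ is therefore trapped between two groups of finite relative index, so it stabilises.

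In short, your instinct about $\ring H_i$ is right, but the point of the Galois orbit being finite is only to make $\ring H_N$ a finite intersection; the finite-index conclusion that actually closes the argument comes from Lemma~\ref{findex}(2), which you did not invoke.
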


\begin{proof}Let $G_1\geq G_2\geq\dots$ be a descending chain of~$acl(\emptyset)$-definable subgroups. According to Lemma~\ref{acl}(1), the local Cantor rank becomes constant after some index~$n$. Then $G_i\cap acl(\emptyset)$ has finite index in $G_n\cap acl(\emptyset)$ for every $i\geq n$ after Lemma~\ref{findex}(2). Let $a$ be some algebraic tuple such that $G_n$ is $a$-definable. By Lemma~\ref{acl}(1), we may add the parameter $a$ in the language and assume without loss of generality that $G_n$ is $\emptyset$-definable. %We write $\ring G_i$ for the intersection of conjugates of~$G_i$ under the action of the automorphisms group of the structure fixing $a$ pointwise.
The intersection of the $\ring G_i\cap acl(\emptyset)$ when $i\geq n$ is the intersection of finitely many of them by Lemma~\ref{findex}(1)~: it is a subgroup of~$G_n\cap acl(\emptyset)$ of finite index, contained in $G_i$ for every $i\geq n$. The sequence of indexes $[G_n\cap acl(\emptyset):G_i\cap acl(\emptyset)]$ is thus bounded, and bounds the length of the chain $G_1\cap acl(\emptyset)\geq G_2\cap acl(\emptyset)\geq\dots$.\end{proof}

\begin{rmq}We shall call this result \emph{the weakly small chain condition}. Note that Theorem~\ref{dccm} is trivial for an $\aleph_0$-categorical group, and also if one replaces the algebraic closure by the definable closure. \end{rmq}

The weakly small chain condition can be slightly generalised as follows.

Let $X$ be a set, and~$E$, $F$ two equivalence relations on $X$. We write $F\leq E$ if $F$ is finer than $E$, and~$F<E$ whenever $F$ strictly refines $E$. If $Y$ is a subset of~$X$, the relation $E$ may be restricted to~$Y$. We write $E\restriction Y$ this restriction. For every $a$ in $X$, we write $aE$ for the equivalence class of~$a$, and denote the cardinal of the equivalence classes of~$E$ in $X$ by $|X/E|$. If~$F\leq E$, we call \emph{index of~$F$ in $E$ (in $X$)} the cardinal $[E:F]$ defined by $\sup\{|aE/(F\restriction{aE})|:a\in X\}$. Note that $[E:F]=1$ if and only if $E=F$. Moreover, if $G$ is a third equivalence relation on $X$ with $G\leq F\leq E$, then $[E:G]\leq[E:F]\cdot[F:G]$. 

\begin{thm}In a weakly small structure $X$, let $E_0,E_1,\dots$ be a descending chain of~$acl(\emptyset)$-definable equivalence relations on $X$, defined respectively over the algebraic tuples $\bar{e_0},\bar{e_1},\dots$. Assume that for all natural numbers $i$ and all $a,b$ in $X$, the classes $aE_i$ and~$bE_i$ are in $\{a,b,\bar{e_i}\}$-definable bijection. Assume also that for all $i$, $a$ and $b$, the class $a\ring E_i$ and $b\ring E_i$ are in $\{a,b\}$-definable bijection. Then, the trace over~$acl(\emptyset)$ of~$E_i$ becomes constant after finitely many steps.\end{thm}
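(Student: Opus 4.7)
The proof will follow the strategy of Theorem~\ref{dccm}, with the two bijection hypotheses replacing the use of left-translation in the group case. I see the argument as three steps.

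First, I show that the local Cantor rank of a class stabilises. For each $i$, Lemma~\ref{fibres}(3) applied to the $\{a,b,\bar{e_i}\}$-definable bijection between $aE_i$ and $bE_i$ gives that these two classes have equal Cantor rank and degree. Hence there is a well-defined \emph{local rank $r_i$ of an $E_i$-class}, and similarly a local rank $s_i$ of a $\ring E_i$-class (using the second bijection hypothesis, which avoids adding $\bar{e_i}$ to the parameters). Since $E_{i+1}\leq E_i$ yields $aE_{i+1}\subseteq aE_i$, and similarly for $\ring E_i$, both sequences $(r_i)$ and $(s_i)$ are non-increasing sequences of ordinals. I pick $n$ beyond which both are constant.

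Second, I establish an analog of Lemma~\ref{findex}. If $a, b\in acl(\emptyset)$ satisfy $aE_n b$ but not $aE_i b$ for some $i\geq n$, then $aE_i$ and $bE_i$ are disjoint subsets of $aE_n$, placed in definable bijection by the first hypothesis, so with equal Cantor rank. Since $r_i=r_n$, both attain the maximal rank in $aE_n$. Exactly as in Lemma~\ref{findex}(1), this forces a strict drop in Cantor degree. The same reasoning, applied this time to the $\emptyset$-definable binary relations $\ring E_i\subseteq X^2$ via the second bijection hypothesis, shows that the descending $\emptyset$-definable chain $\ring E_n\geq \ring E_n\cap\ring E_{n+1}\geq\dots$ cannot strictly refine its restriction to $acl(\emptyset)\times acl(\emptyset)$ infinitely often, since at each strict refinement either a class rank or its degree drops and both are bounded. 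Therefore $\bigcap_{i\geq n}\ring E_i$ agrees on $acl(\emptyset)$ with a finite sub-intersection $R:=\bigcap_{i\in F}\ring E_i$.

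Finally, $R$ is a $\emptyset$-definable equivalence relation refining every $E_i$ with $i\geq n$. The finiteness of the trace index $K:=[E_n\restriction acl(\emptyset):R\restriction acl(\emptyset)]$ follows from the degree bound of the previous step. Since $R\leq E_i\leq E_n$, we have $[E_n\restriction acl(\emptyset):E_i\restriction acl(\emptyset)]\leq K$ for every $i\geq n$. This trace index is a non-decreasing integer-valued function of $i$, bounded by $K$, hence eventually constant; and once constant, the trace itself is constant, for any strict refinement would strictly increase the index.

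I expect the main technical obstacle to be the bookkeeping of parameter sets in the rank comparisons, in particular ensuring that the computations performed over $\{a,b,\bar{e_i}\}$ transfer to $\emptyset$-definable statements about $\ring E_i$. The second bijection hypothesis is essential here, as it is precisely what lets the degree argument for $\ring E_i$-classes run without enlarging the parameter set with $\bar{e_i}$, thereby keeping the chain $\ring E_n\geq\ring E_n\cap\ring E_{n+1}\geq\dots$ inside the $\emptyset$-definable framework required by Lemma~\ref{findex}(1).
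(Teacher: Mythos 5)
Your overall architecture is the paper's own: stabilise the local Cantor rank, prove an analogue of Lemma~\ref{findex} for equivalence relations, and use the $\emptyset$-definable relations $\ring E_i$ (with the second bijection hypothesis) to extract a finite sub-intersection that bounds all the trace indices. Steps 1, 2a and 2b are sound and correspond to Lemma~\ref{findex2}(1) and its use in the paper. The problem is the finiteness of $K=[E_n\restriction acl(\emptyset):R\restriction acl(\emptyset)]$, which you assert ``follows from the degree bound of the previous step.'' It does not. The degree bound of step 2b only counts disjoint definable subsets of \emph{maximal} rank, and the classes of $\ring E_i$ (hence of $R$) have local rank $s_n$, which may be strictly smaller than $r_n$: the class $a\ring E_n$ is an intersection of finitely many conjugate classes $a\sigma(E_n)$, each of rank $r_n$, and nothing forces that intersection to keep rank $r_n$. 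If $s_n<r_n$, an $E_n$-class could a priori contain infinitely many pairwise disjoint $acl(\emptyset)$-definable $R$-classes of rank $s_n$ without violating any degree computation at rank $r_n$, so no bound on $K$ comes out of step 2. Likewise, the degree bound of step 2a only controls, for each fixed $i$, the number of $E_i$-classes of maximal rank inside an $E_n$-class; it says nothing about $R$-classes.

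The paper fills this in two moves that are absent from your write-up. First, it isolates the analogue of Lemma~\ref{findex}(2) (Lemma~\ref{findex2}(2)): \emph{if $aE$ and $aF$ have the same local Cantor rank, the trace index $[E\restriction acl(\emptyset):F\restriction acl(\emptyset)]$ is finite}, proved by passing to the $\emptyset$-definable sets $\overline{aF}\subseteq\overline{aE}$ and counting with the degree over $\emptyset$. Applied with the stabilised ranks $r_i=r_n$, this gives that each individual index $[E_n\restriction acl(\emptyset):E_i\restriction acl(\emptyset)]$ is finite. Second, it adds $\bar e_n$ to the language so that $E_n$ becomes $\emptyset$-definable; then every $Aut(\mathfrak C)$-conjugate $\sigma(E_i)$ still refines $\sigma(E_n)=E_n$ with the \emph{same} finite trace index, so $\ring E_i$, being a finite intersection of such conjugates, has finite trace index in $E_n$ by submultiplicativity of indices, and hence so does the finite intersection $R$. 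Only then is $K$ finite and your concluding monotonicity argument available. (You should also be slightly more careful with that last step: since the index of equivalence relations is defined as a supremum over classes, a strict refinement of the trace need not increase the supremum; the uniform bound $K$ on the number of $R$-classes per $E_n$-class, together with Lemma~\ref{findex2}(1) applied classwise, is what actually stops the chain.)
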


\begin{proof}We begin by the analogue of Lemma~\ref{findex}.
\begin{lem}\label{findex2}Let $X$ be a weakly small structure, and~$E\leq F$ two $\emptyset$-definable equivalence relation on $X$ such that for every $a,b$ in $X$, the classes $aE$ and~$bE$ (respectively $aF$ and~$bF$) are in $\{a,b\}$-definable bijection.\begin{enumerate} \item If $F\restriction_{acl(\emptyset)}<E\restriction_{acl(\emptyset)}$, then for every $a$ in $acl(\emptyset)$, either $CB_a(aF)<CB_a(aE)$, or $dCB_a(aF)<dCB_a(aE)$.
\item If for some $a$ in $acl(\emptyset)$, the classes $aE$ and~$aF$ have the same local Cantor rank, then the index $[E\restriction{acl(\emptyset)}:F\restriction{acl(\emptyset)}]$ is finite.
\end{enumerate}\end{lem}
\renewcommand{\proofname}{Proof of Lemma~\ref{findex2}}
\begin{proof}As $F\restriction{acl(\emptyset)}<E\restriction{acl(\emptyset)}$, one class $e(E\restriction{acl(\emptyset)})$ must split into at least two disjoint classes $e(F\restriction{acl(\emptyset)})$ and~$a(F\restriction{acl(\emptyset)})$. We may add this new constant $e$ to the language and assume that $eE$ and~$eF$ are $\emptyset$-definable. The sets $\overline{aF}$ and~$eF$ are two $\emptyset$-definable disjoint subset of~$\overline{eE}$ having same local Cantor rank, so the first point is proved. If $aE$ and~$aF$ have the same local Cantor rank, then $\overline{aF}$ and~$\overline{aE}$ do have the same rank over~$\emptyset$, so there are only finitely many choices for $\overline{aF}$, and thus for ${aF}$.
\end{proof}
\renewcommand{\proofname}{Proof}
%Let $a_0,a_1,\dots$ be in $acl(\emptyset)$ such that each $E_i$ be $a_i$-definable.
According to Lemma~\ref{fibres} and Lemma~\ref{findex2}(1), the local Cantor rank of the $E_i$-class of every algebraic element becomes constant after finitely many steps, so we may assume it is constant from~$E_0$. Then the index $[E_0\restriction{acl(\emptyset)}:E_i\restriction{acl(\emptyset)}]$ is finite for every $i$ after Lemma~\ref{findex2}(2). Adding finitely many parameters to the language, we may assume that $E_0$ is $\emptyset$-definable. %Let $\ring E_i$ be the conjunction of the conjugates of~$E_i$. 
The conjunction of the $\ring E_i\restriction{acl(\emptyset)}$ is the conjunction of finitely many by Lemma~\ref{findex2}(1). It follows that the sequence of indexes $[E_0\restriction{acl(\emptyset)}:E_i\restriction{acl(\emptyset)}]$ must be bounded.\end{proof}

%Also, a similar proof shows that in a weakly small structure, the trace over~$acl(\emptyset)$ of a descending chain of~$acl(\emptyset)$-definable equivalence relations becomes constant, provided that to classes of each relation be in definable bijection.

%\begin{prop}In a small group, the trace over~$dcl(\emptyset)$ of any decreasing chain of definable subgroups without parameters becomes stationary.\end{prop}

%\begin{proof}Let be $H$ a definable subgroup with no parameters of a small group $G$. If $H\cap dcl(\emptyset)$ is properly contained in $G\cap dcl(\emptyset)$, the group $G$ contains two disjoint cosets of~$H$, so either the rank of~$H$ or its degree decreases.\end{proof}

\section{A property of weakly small groups}

\begin{prop}An infinite group whose centre has infinite index, and with only one non-central conjugacy class, is not weakly small.\end{prop}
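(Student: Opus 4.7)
I would argue by contradiction, assuming $G$ is weakly small and satisfying the hypotheses. The first step is to reduce to the case $Z(G)=\{1\}$. The quotient $\bar G := G/Z(G)$ is interpretable in $G$ and inherits weak smallness: a $1$-type of $\bar G$ over a finite $\bar A$ is determined by a $1$-type of $G$ over any finite lift of $\bar A$, so $|S_1^{\bar G}(\bar A)| \le |S_1^G(A)| \le \aleph_0$. The hypotheses on $G$ pass to $\bar G$: it is infinite and any two non-trivial elements are $\bar G$-conjugate (since their lifts are non-central and hence $G$-conjugate), and $Z(\bar G) = \{1\}$ because the centre is the union of singleton conjugacy classes. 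Replacing $G$ by $\bar G$, I henceforth assume $G$ is an infinite simple group whose conjugacy classes are $\{1\}$ and $C := G \setminus \{1\}$.

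Now fix $a \in C$. Every non-trivial element of $G$ is conjugate to $a$, so the map $g \mapsto gag^{-1}$ is an $\{a\}$-definable surjection $G \to C$ whose fibres are left cosets of $C_G(a)$, and $[G : C_G(a)] = |C|$ is infinite. Setting $\alpha := CB_a(G) = CB_a(C)$ (the second equality because $\{1\}$ has Cantor rank $0$), I aim to exhibit $2^{\aleph_0}$ distinct $1$-types in $S_1(\{a\})$, contradicting weak smallness. Concretely, I would build a perfect binary tree $(\phi_\sigma)_{\sigma \in 2^{<\omega}}$ of $\{a\}$-definable formulas, each of maximal Cantor rank $\alpha$, splitting at every node into two disjoint $\{a\}$-definable children still of rank $\alpha$. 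Natural candidates for the splitting formulas come from iterated commutators $c_n(x,a) := [a, c_{n-1}(x,a)]$ (with $c_0(x,a) := x$) and from formulas of the form $xa^k x^{-1} \in C_G(a^\ell)$ for various $k,\ell \in \mathbb{Z}$; these are all first-order over $\{a\}$ because $\langle a \rangle \subseteq \mathrm{dcl}(a)$.

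The main obstacle is verifying that at every node both branches are non-empty of rank $\alpha$, while the splitting formulas remain over $\{a\}$. My strategy is to combine Lemma~\ref{aut} (conjugation preserves Cantor rank), applied to inner automorphisms by elements of $C$, with Proposition~\ref{fini} (the local almost stabiliser of a maximal-rank set has finite index): any $\{a,b\}$-definable splitting $X = (X \cap bXb^{-1}) \sqcup (X \setminus bXb^{-1})$ produced by a conjugate $bXb^{-1}$ can be translated back into an $\{a\}$-definable splitting by absorbing the parameter $b \in C$ through the transitive conjugation action of $G$ on $C$. The chain condition of Theorem~\ref{dccm}, applied to the descending sequence $C_G(a) \supseteq C_G(a) \cap C_G(a^2) \supseteq \cdots$ of $\{a\}$-definable subgroups of $G$, should then supply a countable independent family of $\{a\}$-definable formulas, whose Stone space contains $2^{\aleph_0}$ types, delivering the contradiction. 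The hypothesis $[G : Z(G)] = \infty$ is essential throughout, since otherwise $C$ would be finite and none of the required branchings would be available.
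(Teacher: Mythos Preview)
Your reduction to $Z(G)=\{1\}$ is fine and matches the paper. After that, the plan has a genuine gap. The descending chain you propose at the end, $C_G(a)\supseteq C_G(a)\cap C_G(a^2)\supseteq\cdots$, is constant: every $a^n$ lies in $\langle a\rangle$, so $C_G(a)\subseteq C_G(a^n)$ and each intersection equals $C_G(a)$. Theorem~\ref{dccm} therefore gives nothing. The earlier part of the sketch---building a perfect binary tree of $\{a\}$-definable sets of full rank---is not substantiated either: you never exhibit a single node at which both branches are nonempty of rank $\alpha$, and the phrase ``absorbing the parameter $b$ through the transitive conjugation action'' does not explain how a genuinely $\{a,b\}$-definable splitting becomes $\{a\}$-definable. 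All of your candidate formulas live over $\mathrm{dcl}(a)=\langle a\rangle$, and centralisers of powers of a single element only go \emph{up}, never down, so no strict descent can come from them.

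The paper supplies exactly the missing idea, and it does not build a tree at all. After killing the centre one first notes there are no involutions (else the group is abelian). Hence for any $g\neq1$ both $g$ and $g^{-1}$ are nontrivial and therefore conjugate: choose $h$ with $h^{-1}gh=g^{-1}$. Then $g\in C(h^2)\setminus C(h)$. Since $h,h^2\neq1$ are conjugate, choose $k$ with $h^k=h^2$; inductively $h^{k^n}=h^{2^n}$. Now pass to \emph{double} centralisers: from $C(h)\subseteq C(h^k)\subseteq C(h^{k^2})\subseteq\cdots$ one gets $C(C(h))\supseteq C(C(h^k))\supseteq C(C(h^{k^2}))\supseteq\cdots$, all definable over $\delta:=\mathrm{dcl}(h,k)$. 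The element $h^{k^{n-1}}\in\delta$ witnesses strictness at step $n$ (conjugate the relation $h\in C(C(h))\setminus C(C(h^k))$ by $k^{n-1}$). This infinite strictly descending chain of $\delta$-definable subgroups, with strictly descending traces on $\delta$, contradicts the weakly small chain condition (Theorem~\ref{dccm}). The two ingredients you are missing are the use of double centralisers (to reverse the inclusion) and the introduction of the second parameter $k$ implementing $h\sim h^2$; a single element $a$ and its powers cannot produce the required strict descent.
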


\begin{rmq}This is the analogue of the stable case \cite[Th\'eor\`eme 3.10]{Poi2} stating that \emph{an infinite group with only one non-trivial conjugacy class is unstable}, which itself comes from the minimal case \cite[Reineke]{Rei}.\end{rmq}

\begin{proof}Note that the group has no second centre. Moding out the centre, we may suppose that the centre is trivial. If there is a non-trivial involution, every element is an involution and the group is abelian, a contradiction. Any non-trivial element $g$ is conjugated to~$g^{-1}$ by some element, say $h$. So $h$ is non-trivial and conjugated to~$h^2$, which equals $h^k$ for some $k$. Write $\delta$ for the definable closure of~$h$ and~$k$. Since $g$ is in $C(h^k)$ and~$gh\neq hg$, the element $h$ belongs to~$(C(C(h))\cap\delta)\setminus(C(C(h^k))\cap\delta)$. It follows that the chain $$C(C(h))\cap \delta>C(C(h^k))\cap \delta>C(C(h^{k^2}))\cap \delta>\cdots$$ is infinite, contradicting the weakly small chain condition \ref{dccm}.\end{proof}

Let $G$ be any group. We say that a subgroup $H$ of~$G$ is \emph{proper} if it is not $G$.

\begin{prop}\label{hm} An infinite non-abelian weakly small group has proper centralisers of cardinality greater than~$n$ for each natural number~$n$.\end{prop}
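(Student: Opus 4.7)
The plan is to argue by contradiction. Suppose there is some $n\in\mathbb N$ such that every proper centraliser of $G$ has cardinality at most $n$; the goal is to derive an infinite descending chain of subgroups definable over a finitely generated algebraic closure whose traces strictly decrease, contradicting Theorem~\ref{dccm}.

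First I extract the structural consequences of the assumption. Since $G$ is non-abelian, choose $g\notin Z(G)$; its centraliser $C(g)$ is then proper, so $|C(g)|\leq n$. The inclusions $Z(G)\subseteq C(g)$ and $\langle g\rangle\subseteq C(g)$ force $|Z(G)|\leq n$ and $g$ of finite order at most $n$. Hence every non-central element has bounded finite order and $G$ has bounded exponent. By Corollary~\ref{conj}, $G\setminus Z(G)$ is a union of at most $n$ conjugacy classes, so $G$ has only finitely many conjugacy classes in total.

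Next I adapt the strategy of Proposition~3.1. I look for non-trivial elements $h,k\in G$ with $k^{-1}hk = h^m$ for some integer $m>1$ coprime to $\mathrm{ord}(h)$, and then consider in $\delta=dcl(h,k)$ the chain
\[
C(C(h))\cap\delta \supsetneq C(C(h^m))\cap\delta \supsetneq C(C(h^{m^2}))\cap\delta \supsetneq\cdots
\]
To produce such data, pick a non-central element $h_0$ of prime order $p$ with $p-1$ larger than the number of non-central conjugacy classes. The nontrivial powers $h_0,h_0^2,\ldots,h_0^{p-1}$ all generate $\langle h_0\rangle$ and lie in the finite set of non-central conjugacy classes, so pigeonhole yields $1\leq i<j<p$ with $h_0^i$ conjugate to $h_0^j$; taking $h=h_0^i$ and $m=ji^{-1}\pmod p$ gives the required configuration. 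The strict descent of the chain is established exactly as in Proposition~3.1, by iterating conjugation by $k$ and exploiting that $h$ does not commute with a suitably chosen witness, but $h^m$ does.

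The main obstacle is twofold: one must ensure the existence of a prime $p$ large enough to run the pigeonhole, and one must keep the chain strictly descending for infinitely many steps despite $h$ having finite order (the sequence $h^{m^\ell}$ only takes $\mathrm{ord}_p(m)$ distinct values). When the exponent of $G$ has only small prime factors, one iterates the pigeonhole inside larger definable closures $dcl(h_1,k_1,\ldots,h_r,k_r)$, using additional non-central classes to extract further conjugation relations $k_i^{-1}h_ik_i=h_i^{m_i}$, and weaves these into a longer chain of subgroups whose traces in $acl(h_1,k_1,\ldots,h_r,k_r)$ strictly decrease at every stage, finally contradicting the weakly small chain condition~\ref{dccm}.
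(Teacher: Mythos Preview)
Your attempt to transplant the chain argument from Proposition~3.1 fails at the very first step. You pick $h$ of prime order $p$ and find $k$ with $k^{-1}hk=h^m$ for some $m$ coprime to $p$. But then $\langle h\rangle=\langle h^m\rangle$, so $C(h)=C(h^m)$ and hence $C(C(h))=C(C(h^m))$: the ``chain'' is constant, not strictly descending. In Proposition~3.1 the descent works precisely because $h$ has infinite order (forced there by the single non-trivial conjugacy class), so that $C(h)\subsetneq C(h^2)$ can be witnessed by the element $g$. In a group of bounded exponent this mechanism is unavailable, and no amount of ``weaving'' several conjugation relations $k_i^{-1}h_ik_i=h_i^{m_i}$ will produce a strictly descending chain of double centralisers, since each such relation gives $C(h_i)=C(h_i^{m_i})$ whenever $h_i$ has prime order. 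Your final paragraph is therefore not a sketch of an argument but a description of an obstacle you have not overcome.

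The paper's proof proceeds along entirely different lines. After the common opening (finite centre, bounded exponent, finitely many conjugacy classes), it first reduces to the case where every proper normal subgroup is central and then kills the centre. A Sylow argument shows $G$ is not locally finite, so one can work inside an infinite finitely generated algebraic closure $\Gamma$. The heart of the proof is then \emph{Poizat's symmetry argument}: using the local almost stabilisers $Stab_\Gamma(a^G)$ of conjugacy classes (which are normal of finite index in $\Gamma$, hence all of $\Gamma$ after the reductions), one shows that any two non-central classes coincide, so $G$ has a single non-trivial conjugacy class. At that point the elementary Reineke contradiction (exponent~$2$ forces abelian; odd prime exponent is impossible since $x\sim x^{-1}$ would produce an element of even order) finishes the job. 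The local stabiliser machinery and the symmetry argument are the genuinely new ingredients; the double-centraliser chain of Proposition~3.1 plays no role here.
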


\begin{proof}For a contradiction, let $G$ be a weakly small counter example with all proper centralisers finite of bounded size~$n$. Note that $G$ has finite exponent, and a finite centre.

\textit{(1) The group $G$ has finitely many conjugacy classes.}

As the centralisers have bounded size, we apply Corollary~\ref{conj}. We may also add a member $a_i$ of each class to the language and assume that every conjugacy class is $\emptyset$-definable.

\textit{(2) We may assume every proper normal subgroup of~$G$ to be central.}

We claim that a normal subgroup must be central or have finite index in $G$~: a normal subgroup is the union of conjugacy classes, hence is $\emptyset$-definable. By Lemma~\ref{fibres}, the conjugacy class of a non central element, $a_1^G$ say, must have maximal Cantor rank over~$\emptyset$. It follows from Lemma~\ref{fibres}(3) that any proper infinite normal subgroup has index at most~$n$. One may replace $G$ by a minimal union $C$ of conjugacy classes (with at least one of them non-central) closed under multiplication~: as the group $C$ has finite index in $G$, every possible non-central proper normal subgroup $H$ in $C$ has finite index in $G$, and would give birth to a subgroup $N$ of~$H$, normal in $G$, and of finite index in $G$, contradicting the minimality of~$C$.

\textit{(3) We may assume that the centre of~$G$ is trivial.}

Should $G/Z(G)$ be abelian, $G/Z(G)$ would be be finite, as $G$ has only finitely many conjugacy classes. This is not possible as $G$ is infinite. It follows that the second centre $Z_2(G)$ of~$G$ is a proper normal subgroup in $G$. By $(2)$, one has $Z_2(G)=Z(G)$. Moding out by the centre (which preserves weak smallness as well as the assumption that the centralisers have bounded size), we may assume that the centre of~$G$ is trivial.

\textit{(4) The group $G$ is not locally finite.}

Assume that $G$ be locally finite. Since it has finite exponent, there is a prime number $p$ such that for every natural number~$n$, there is a finite subgroup $H$ of~$G$ whose cardinality is divisible by $p^n$. Then $H$ has Sylow subgroup $S$ of cardinality at least $p^n$. But $S$ has a non-trivial centre, the centraliser of any element of which contains the whole Sylow, a contradiction. %Moreover, as the group has finite exponent, the number of Sylow subgroups of a finite group is bounded.
Thus, one can consider a finitely generated infinite algebraic closure $\Gamma$.

\textit{(5) The group $\Gamma$ has finitely many conjugacy classes.}

Any $x$ in $\Gamma$ can be written $a_i^y$. As $C(a_i)$ is finite, $y$ is algebraic over~$a_i$ and~$x$.

\textit{(6) One may assume the proper normal subgroups of~$\Gamma$ to be trivial.}

By (2) and (3), no proper union of conjugacy classes $C_1,\dots,C_m$ (in the sense of~$G$) is closed under multiplication. We may add finitely many parameters witnessing this fact to the language.

\textit{(7) For every conjugacy class $a^G$, the group $Stab_{\Gamma} (a^G)$ equals $\Gamma$.}

The local stabiliser of~$a^G$ in $\Gamma$ is a normal subgroup of~$\Gamma$ by Corollary~\ref{subnorm}. It must be non-trivial according to Proposition~\ref{fini}, hence equals $\Gamma$ by $(6)$.

\textit{(8) $G$ has only one non-central conjugacy class.}

We use an argument of Poizat in~\cite{dminimal}, which we shall call \emph{Poizat's symmetry argument}. Let $a=a_i$ and~$b=a_j$ be representatives of any two non-trivial conjugacy classes (in particular, $a,b$ are in $\Gamma$). For every conjugate $xbx^{-1}$ of~$b$ except a set of small Cantor rank over~$a$ and~$b$, the elements $axbx^{-1}$ and~$b$ are conjugates. As a surjection with bounded fibres preserves the rank, for all $x$ except a set of small rank, $axbx^{-1}$ and~$b$ are conjugates. Symmetrically, for all $x$ except a set of small rank, $x^{-1}axb$ and~$a$ are conjugates~: one can find some $x$ such that $axbx^{-1}$ and~$x^{-1}axb$ are conjugated respectively to~$b$ and~$a$. Thus, $b$ and~$a$ lie in the same conjugacy class.

\textit{(9) Final contradiction.}

$G$ is an infinite group with bounded exponent and only one non-trivial conjugacy class. Such a group does not exist \cite[Reineke]{Rei, dminimal}. For instance, as a group of exponent $2$ is abelian, the group should have exponent a prime $p\neq 2$. If~$x\neq 1$, the elements $x$ and~$x^{-1}$ would be conjugated under some element $y$ of order $2$ modulo the centraliser of~$x$, which prevents the group from having exponent $p$.\end{proof}

\begin{thm}\label{thm1}A small infinite $\aleph_0$-saturated group has an infinite abelian subgroup.\end{thm}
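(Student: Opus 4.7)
The plan is to build inductively, under the assumption that $G$ is non-abelian, a strictly descending chain of infinite definable subgroups $G=G_0>G_1>\cdots$ together with elements $a_n\in G_n\setminus Z(G_n)$ such that $G_{n+1}=C_{G_n}(a_n)$. The outcome will be an infinite sequence of pairwise commuting, pairwise distinct elements $(a_n)$ generating the sought infinite abelian subgroup. Should the construction halt at some stage because $G_n$ is abelian or because $Z(G_n)$ is infinite, then $G_n$ itself or $Z(G_n)$ is already an infinite abelian subgroup of $G$ and we are done.

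For the inductive step, suppose $G_n$ is an infinite, non-abelian, definable subgroup of $G$ with finite centre. Since $G_n$ is interpretable in $G$, it is again weakly small. Apply Proposition~\ref{hm} to $G_n$: for every natural number $k$, there exists $a\in G_n$ with $a\notin Z(G_n)$ and $|C_{G_n}(a)|\geq k$. Because $Z(G_n)$ is a finite definable set, its elements are algebraic over the parameters used to define $G_n$, and adjoining them to the language preserves the $\aleph_0$-saturation of $G$. The partial $1$-type over these finitely many parameters, expressing that $x\in G_n$, that $x$ differs from every element of $Z(G_n)$, and that $|C_{G_n}(x)|\geq k$ for every $k$, is thus finitely satisfiable and so realised in $G$ by some $a_n$. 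Setting $G_{n+1}:=C_{G_n}(a_n)$ produces an infinite definable subgroup strictly contained in $G_n$, the strict containment being witnessed by any element of $G_n$ that fails to commute with $a_n$.

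The resulting sequence $(a_n)$ is as required. A straightforward induction shows $G_{n+1}=\bigcap_{i=0}^n C_G(a_i)$, so for each $i\leq n$ the element $a_i$ lies in $Z(G_{n+1})$. Since $a_{n+1}$ was chosen outside $Z(G_{n+1})$, it commutes with each earlier $a_i$ but differs from all of them; hence the $(a_n)$ are pairwise commuting and pairwise distinct, and $\langle a_n:n\in\mathbf{N}\rangle$ is an infinite abelian subgroup of $G$.

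The only genuinely delicate point is the inductive step. Proposition~\ref{hm} on its own delivers merely arbitrarily large \emph{finite} proper centralisers, and one needs to upgrade this into the existence of a single non-central element with an \emph{infinite} centraliser. This is exactly where $\aleph_0$-saturation enters, combined with the observation that a finite centre can be absorbed into the parameter set without disturbing saturation; this is also the only place where the hypotheses ``small'' and ``$\aleph_0$-saturated'' are both invoked.
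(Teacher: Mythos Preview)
Your proof is correct and follows essentially the same route as the paper's: invoke Proposition~\ref{hm} to obtain, in any infinite non-abelian weakly small group, proper centralisers of arbitrarily large finite size, then use $\aleph_0$-saturation to realise the partial type giving a non-central element with infinite centraliser, and iterate. The paper compresses all this into three sentences, while you spell out the inductive construction and the verification that the $a_n$ are pairwise distinct and commuting; one cosmetic simplification is that you need not name the elements of $Z(G_n)$ individually, since the single formula $x\notin Z(G_n)$ is already definable over the parameters for $G_n$.
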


\begin{proof}By Proposition~\ref{hm} and saturation, such a group is either abelian, or has an infinite proper centraliser. Iterating, one either ends on an infinite abelian centraliser after finitely many steps or builds an infinite chain of pairwise commuting elements. These elements generate an infinite abelian subgroup.\end{proof}

Appealing to Hall-Kulatilaka-Kargapolov, who use Feit-Thomson's Theorem, one can say much more, and manage without the compactness theorem. Recall 

\begin{fact}[Hall-Kulatilaka-Kargapolov \cite{HK}]An infinite locally finite group has an infinite abelian subgroup.\end{fact}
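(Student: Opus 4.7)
The plan is to argue by contradiction. Assume $G$ is an infinite locally finite group in which every abelian subgroup is finite, and derive a contradiction. The external inputs will be the Feit--Thompson odd order theorem and the Brauer--Fowler bound on finite groups containing an involution.

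First I would exhibit an involution in $G$. If every nontrivial element of $G$ had odd order, then by Feit--Thompson every finite subgroup of $G$ would be solvable. Writing $G$ as the directed union of its finite subgroups, one extracts either a chain of unbounded derived length --- in which case chasing a nontrivial commutator down the derived series, in a direct limit, produces infinitely many pairwise commuting nontrivial elements --- or a chain of bounded derived length, in which case an iterated centraliser construction inside each member of the chain produces an abelian subgroup of bounded index, and the increasing union yields an infinite abelian subgroup of $G$. Either alternative contradicts the hypothesis, so $G$ contains some involution $t$.

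Next I would invoke the Brauer--Fowler bound: for every finite subgroup $H$ of $G$ containing $t$, one has $|H| \leq (2|C_H(t)|)^2 \leq (2|C_G(t)|)^2$. Hence if $C_G(t)$ were finite, the finite subgroups of $G$ containing $t$ would all have uniformly bounded order, forcing $G$ itself to be finite. So $C_G(t)$ is infinite. But $C_G(t)$ is again an infinite locally finite group in which every abelian subgroup is finite, so the same dichotomy produces an involution $t_1 \in C_G(t)$ commuting with $t$; similarly one finds $t_2$ commuting with both, and so on. The sequence $t, t_1, t_2, \ldots$ generates an infinite elementary abelian $2$-subgroup, contradicting the standing hypothesis.

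The main obstacle is ensuring the iteration in the last step strictly enlarges the set of commuting involutions at every stage. This requires applying the Brauer--Fowler bound with the product $t \cdot t_1 \cdots t_k$ as the test involution whenever the iterated centraliser $C_G(t, t_1, \ldots, t_k)$ threatens to stabilise, so that it remains infinite and yields a genuinely new involution $t_{k+1}$. A secondary delicate point lies in the first-step derived-length dichotomy, where the extraction of commuting elements along the chain of finite solvable subgroups must be made constructive and not implicitly appeal to compactness; this is precisely what distinguishes Hall--Kulatilaka--Kargapolov from the model-theoretic Theorem~\ref{thm1} and explains why the authors invoke it as an external fact.
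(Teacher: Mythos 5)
The paper offers no proof of this statement: it is imported verbatim as an external Fact from Hall--Kulatilaka (and Kargapolov), precisely because it is a deep theorem whose proof lies outside the scope of the paper. Your overall skeleton --- use Feit--Thompson to split into a ``no involutions, hence locally solvable'' case and an ``analyse centralisers of involutions'' case --- is indeed the correct shape of the Hall--Kulatilaka argument, but both branches of your proof contain genuine gaps, and the second contains an outright false lemma.

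The fatal error is the ``Brauer--Fowler bound'' $|H|\le (2|C_H(t)|)^2$. No bound of $|H|$ in terms of $|C_H(t)|$ alone exists for general finite groups: in the dihedral group of order $2n$ with $n$ odd, a reflection has centraliser of order $2$ while $n$ is arbitrary. Brauer--Fowler only bounds $|G|$ in terms of the centraliser order for \emph{simple} groups (and the bound is of the shape $(n^2)!$, not $(2n)^2$); for general groups it merely produces a proper subgroup of bounded index. Consequently your inference ``$C_G(t)$ finite $\Rightarrow$ $G$ finite'' fails: the semidirect product of a Pr\"ufer $p$-group ($p$ odd) with the inverting involution is an infinite locally finite group in which that involution has centraliser of order $2$. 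Proving that, in a putative counterexample, \emph{some} involution has infinite centraliser is essentially the hard core of the theorem (it requires the simple-group form of Brauer--Fowler together with substantial reduction arguments, or Shunkov-type theorems on involutions with finite centraliser), and cannot be obtained by the bookkeeping you describe. The first branch is also unsound as written: an unbounded derived length along a chain of finite solvable groups does not ``produce infinitely many pairwise commuting elements,'' and finite solvable groups of derived length $2$ need not have abelian subgroups of bounded index (extraspecial $p$-groups of order $p^{2n+1}$ have largest abelian subgroups of index $p^{n}$). What is true, and what Hall--Kulatilaka actually use, is that Feit--Thompson makes a counterexample without involutions locally solvable, and that infinite locally finite locally solvable groups have infinite abelian subgroups --- itself a nontrivial theorem of Mal'cev type, not a two-line centraliser iteration. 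In short, the proposal reproduces the announced strategy of the cited paper but none of its substance; the statement should remain a cited Fact.
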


\begin{thm}\label{thm2}A weakly small infinite group has an infinite abelian subgroup.\end{thm}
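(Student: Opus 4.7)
The plan is to first dispose of the case of an element of infinite order, and then, in the torsion case, to iterate Proposition~\ref{hm} along a chain of centralisers, falling back on Hall-Kulatilaka-Kargapolov when this iteration runs out of fuel. If some $a \in G$ has infinite order, then $\langle a \rangle$ is already an infinite cyclic, hence abelian, subgroup; so I may assume from now on that $G$ is torsion.

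I would then build, inductively, a decreasing sequence of infinite non-abelian definable subgroups of $G$. Put $G_0 = G$; given $G_i$ infinite and non-abelian, Proposition~\ref{hm} applies (since $G_i$, being definable in $G$, is weakly small) and produces elements of $G_i$ whose centralisers are proper and of arbitrarily large cardinality. If among these some centraliser is infinite, pick $a_{i+1} \in G_i \setminus Z(G_i)$ with $C_{G_i}(a_{i+1})$ infinite and set $G_{i+1} := C_{G_i}(a_{i+1})$. By construction, $a_1, \ldots, a_i$ all lie in $Z(G_i)$ and so pairwise commute, and they are pairwise distinct since each $a_{i+1}$ lies outside $Z(G_i)$. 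If the iteration ever produces an abelian $G_i$, that $G_i$ is an infinite abelian subgroup and we are done. If the iteration continues indefinitely, the subgroup $\langle a_1, a_2, \dots\rangle$ is infinite and abelian, again finishing the argument.

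The hard part will be the case in which the iteration halts at some $G_n$ which is infinite, non-abelian, torsion, weakly small, yet in which \emph{every} proper centraliser is finite. Here the plan is to produce an infinite locally finite subgroup of $G_n$ and conclude via Hall-Kulatilaka-Kargapolov. The key claim I expect to establish is that $G_n$ is itself locally finite: given $h_1, \ldots, h_k \in G_n$, the centre $Z(G_n)$ is finite (as a subgroup of any finite proper centraliser $C_{G_n}(b)$), the joint centraliser $\bigcap_i C_{G_n}(h_i)$ is finite as soon as some $h_i$ lies outside $Z(G_n)$, and I expect the weakly small chain condition of Theorem~\ref{dccm}, together with the finiteness of every proper centraliser, to force $\langle h_1, \ldots, h_k\rangle$ itself to be finite. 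Once local finiteness of $G_n$ is in hand, the Fact of Hall-Kulatilaka-Kargapolov delivers the desired infinite abelian subgroup of $G_n \subseteq G$.
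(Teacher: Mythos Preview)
Your reduction to the ``hard case'' --- an infinite non-abelian weakly small group $G_n$ in which every proper centraliser is finite --- is correct and matches the paper's overall shape. The gap is exactly where you flag it: the expectation that Theorem~\ref{dccm} together with finite centralisers forces $\langle h_1,\dots,h_k\rangle$ to be finite is unjustified. The chain condition concerns traces of $acl(\emptyset)$-definable subgroups on $acl(\emptyset)$; it says nothing about the cardinality of an arbitrary finitely generated subgroup, and knowing that $\bigcap_i C_{G_n}(h_i)$ is finite does not bound $\langle h_1,\dots,h_k\rangle$ (Tarski monsters illustrate the combinatorial obstacle). In fact the paper uses Hall--Kulatilaka--Kargapolov in the \emph{opposite} direction: since any infinite abelian subgroup of $G_n$ would yield an infinite proper centraliser, $G_n$ has none, and HKK then forces $G_n$ to be \emph{not} locally finite. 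So your proposed route is closed off before you start down it.

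What the paper does instead is real work. From non-local-finiteness one fixes an infinite finitely generated subgroup $\gamma$. By Lemma~\ref{fibres} the finitely many $G$-conjugacy classes meeting $\gamma$ all have maximal Cantor rank over $\gamma$; their local almost stabilisers have finite index in $\gamma$ by Proposition~\ref{fini}. Poizat's symmetry argument (as in step~(8) of the proof of Proposition~\ref{hm}) then shows that the intersection of these stabilisers consists of a finite central piece together with a single $G$-conjugacy class $C_\gamma$, and one checks $C_\gamma$ is independent of $\gamma$. Hence $C\cup Z$ is a subgroup of $G_n$, and in that subgroup every proper centraliser has \emph{bounded} size, reducing to Proposition~\ref{hm} and yielding the contradiction. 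This Cantor-rank-and-stabiliser argument is the genuine content of the theorem, and your proposal omits it entirely.
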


\begin{proof}We need just show that any weakly small infinite group is either abelian or has an infinite proper centraliser~: if this is the case, iterating, one either gets an infinite abelian centraliser or builds an infinite chain of pairwise commuting elements.

So let $G$ be a non abelian counter-example. Every non central element of~$G$ has finite centraliser, and~$G$ has a finite centre. %We may assume that its centre is trivial??
The group $G$ cannot have an infinite abelian subgroup. According to Hall-Kulatilaka-Kargapolov, $G$ is not locally finite. By Lemma~\ref{fibres}(3), an infinite finitely generated subgroup $\gamma$ splits into finitely many conjugacy classes (in the sense of~$G$). By Lemma~\ref{fibres}, these classes have maximal Cantor rank over~$\gamma$. By Proposition~\ref{fini}, the almost stabiliser of every such class is a normal subgroup of finite index in $\gamma$. After Poizat's symmetry argument, the intersection of almost stabilisers of all conjugacy classes meeting $\gamma$ consists of a (finite) central subgroup $Z$ together with $C_\gamma\cap\gamma$, where $C_\gamma$ is a conjugacy class in $G$. It is easy to see that $C_\gamma$ is the same for all finitely generated infinite subgroups $\gamma$, so we can denote this unique conjugacy class by $C$. We conclude that $C\cup Z$ is a subgroup of~$G$. %it is the union of the neutral element and only one class $C$, which we shall call the \emph{privileged class}, as it is the same for every finitely generated group $\gamma$. Let $g$ be an element in the algebraic closure $\Gamma$ of~$\gamma$. The almost stabiliser of~$g^G$ in $\Gamma$ is a normal subgroup of~$\Gamma$ containing one element of the privileged class $C$~; as two elements in $C\cap\Gamma$ are conjugated in $\Gamma$, this almost stabiliser contains $C\cap\Gamma$. It follows that the almost stabiliser of every conjugacy class of elements in $\Gamma$ is $C\cap\Gamma$ plus the neutral element. As this holds for every finitely generated $\Gamma$, the set $C\cup\{1\}$ is a subgroup of~$G$.
Replacing $G$ by the later, we are back to the case where all proper centralisers have bounded size, a contradiction with Proposition~\ref{hm}.\end{proof}

\begin{rmq}The initial proof of Theorem~\ref{thm1} used Hall-Kulatilaka-Kargapolov. The author is grateful to Poizat who adapted the proof to a weakly small group and made clarifying remarks.\end{rmq}

\begin{rmq}One cannot expect the infinite abelian group to be definable, as Plotkin found infinite $\aleph_0$-categorical groups without infinite definable abelian subgroups~\cite{Plo}.\end{rmq}

\section{Small nilpotent groups}

We now switch to small nilpotent groups. Let us first recall that the structure of small abelian pure groups is already known~:

\begin{fact}\label{gpabel}\emph{(Wagner \cite{WQ})} A small abelian group is the direct sum of a definable divisible group with one of bounded exponent.\end{fact}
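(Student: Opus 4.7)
The plan is to isolate the maximal divisible subgroup of $G$ as a $\emptyset$-definable set of the form $NG$ for a suitable natural number $N$, and then to appeal to the injectivity of divisible abelian groups to extract the direct-sum decomposition.

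\textbf{Step 1 (smallness-forced finiteness).} I use smallness twice. First, I claim the set $P := \{p \text{ prime} : pG \ne G\}$ is finite. Otherwise, exactly as in the introductory example of the non-weakly-small group $\bigoplus_p \mathbb{Z}/p\mathbb{Z}$, one realises in an $\aleph_1$-saturated elementary extension an element $x$ with ``$x \in pG$ iff $p \in S$'' for any prescribed $S \subseteq P$, producing $2^{\aleph_0}$ pairwise distinct complete $1$-types over $\emptyset$. Second, for each $p \in P$ I claim the $\emptyset$-definable descending chain $G \supseteq pG \supseteq p^2 G \supseteq \cdots$ stabilises at some finite index $N_p$. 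If it were strictly decreasing, one can, in a saturated extension, exhibit tuples whose pp-type encodes a prescribed $p$-adic parameter, inflating some $n$-type space over $\emptyset$ to size $2^{\aleph_0}$ and again contradicting smallness.

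\textbf{Step 2 (the divisible part).} Set $N := \prod_{p \in P} p^{N_p}$ and $D := NG$, a $\emptyset$-definable subgroup. For every prime $q$: if $q \notin P$ then $qG = G$ and $qD = qNG = NG = D$; if $q = p \in P$, then by Step~1 we have $p^{N_p+1}G = p^{N_p}G$, so $pD = D$. Hence $D$ is divisible. Moreover $NG \subseteq D$ makes $G/D$ of exponent dividing $N$.

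\textbf{Step 3 (splitting).} A divisible abelian group is injective in the category of abelian groups, so the exact sequence $0 \to D \to G \to G/D \to 0$ splits, yielding $G = D \oplus B$ with $B \cong G/D$ of bounded exponent at most $N$. To make $B$ itself definable, one uses that a bounded abelian group decomposes, by Pr\"ufer--Baer, as a direct sum of cyclic prime-power groups; combined with Szmielew's pp-elimination for $\mathbb{Z}$-modules, one builds an $\emptyset$-definable section of $G \to G/D$ level by level in each of the finitely many primary pieces $p^j G \cap G[p^k]$ with $p \in P$ and $j, k \le N_p$.

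\textbf{Main obstacle.} Both halves of Step~1 are the decisive uses of smallness, and both consist of upgrading what looks, from the viewpoint of $1$-types over $\emptyset$ in $G$ alone, like merely countably many ``visible'' types into $2^{\aleph_0}$ complete types over $\emptyset$ of the theory. This genuinely requires passing to an $\aleph_1$-saturated extension and exploiting interactions between $p$-heights of sums (or pp-types of longer tuples) to encode arbitrary binary parameters; this is the technical heart of the argument, whereas Steps~2 and~3 are formal consequences plus standard structure theory.
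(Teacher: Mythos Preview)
The paper does not prove this statement; it is recorded as a Fact with a citation to Wagner, so there is no in-paper argument to compare against. Your Steps~1--2 together with the injectivity splitting at the start of Step~3 correctly recover the result, and this is essentially Wagner's original line.

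Two points deserve attention. In Step~1, the finite consistency of the partial types $\{x\in pG:p\in S\}\cup\{x\notin qG:q\in P\setminus S\}$ does not follow merely by analogy with $\bigoplus_p\mathbf Z/p\mathbf Z$: one must observe that $pG+qG=G$ for distinct primes (B\'ezout), whence the map $G\to\prod_{p\in F}G/pG$ is surjective for every finite $F\subset P$, and then the pattern $S$ is realised. Likewise, for the $p$-adic coding you should first note that a saturated model contains an element $a$ with $p^ka\notin p^{k+1}G$ for all $k$; this follows by compactness, since any preimage under $x\mapsto p^Nx$ of an element of $p^NG\setminus p^{N+1}G$ witnesses the first $N$ instances. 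Only then can the $2^{\aleph_0}$ types be encoded over the single parameter $a$.

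The final paragraph of Step~3, however, overshoots the target and is false in general. The Fact only claims that the \emph{divisible} summand is definable; the remark immediately following it in the paper says explicitly that the bounded-exponent complement need not be definable (it is merely contained in the definable group $\{x\in G:Nx=0\}$). Your proposed $\emptyset$-definable section of $G\to G/D$ cannot exist: already for $G$ the direct sum of a Pr\"ufer $p$-group with an infinite elementary abelian $p$-group, no complement of $D$ is invariant under $\mathrm{Aut}(G)$. Drop that paragraph; the proof is complete once $D$ is split off by injectivity.
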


\begin{rmq}The group of bounded exponent need not be definable, but it is contained in a definable group of bounded exponent.\end{rmq}

\begin{rmq}\label{PB}Since Pr\"ufer and Baer, one knows that a divisible abelian group is isomorphic to direct sums of copies of~$\mathbf{Q}$ and Pr\"ufer groups, whereas an abelian group of bounded exponent is isomorphic to a direct sum of cyclic groups \cite{Fuc}. It follows that the theory of a small pure group has countably many denumerable pairwise non-isomorphic models~; thus, Vaught's conjecture holds for the theory of a pure abelian group. More generally, Vaught's conjecture holds for every complete first order theory of module over a countable Dedekind ring (and thus for a module over~$\mathbf{Z}$), as well as for several classes of modules over countable rings \cite[Puninskaya]{Pun}.\end{rmq}

\begin{rmq}Fact~\ref{gpabel} does not hold for a weakly small abelian group~: consider the sum over~$n$ of cyclic groups of order $p^n$. But one may say~:\end{rmq}

\begin{prop}In a weakly small abelian group, for every natural number~$n$, any element is the sum of an $n$-divisible element with one of finite order.\end{prop}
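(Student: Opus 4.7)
The plan is to produce the decomposition $g = a + b$ by applying the weakly small chain condition (Theorem~\ref{dccm}) to the descending chain of $\emptyset$-definable subgroups $(n^k G)_{k \geq 0}$, after adjoining $g$ as a new constant symbol.

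First I would observe that adjoining $g$ to the language preserves weak smallness (adding finitely many parameters preserves the property), so Theorem~\ref{dccm} applies in the enriched language: the intersections $n^k G \cap \mathrm{acl}(g)$ stabilise for $k$ large, giving an integer $N$ with
\[
n^k G \cap \mathrm{acl}(g) = n^N G \cap \mathrm{acl}(g) \quad \text{for every } k \geq N.
\]

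Next, the key observation: the element $n^N g$ belongs to $\mathrm{dcl}(g) \subseteq \mathrm{acl}(g)$ and lies in $n^N G$, so stabilisation at step $N+1$ forces $n^N g \in n^{N+1} G$. Pick $h \in G$ with $n^N g = n^{N+1} h$; this rearranges as $n^N(g - nh) = 0$.

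Finally, set $a := nh$ and $b := g - nh$. Then $a$ is $n$-divisible by construction, and $b$ lies in $\ker(n^N)$, hence has finite order dividing $n^N$, giving the required decomposition $g = a + b$. I do not anticipate a real obstacle here: Theorem~\ref{dccm} does all the heavy lifting, and the only creative step is recognising that the single additional level of divisibility $n^{N+1} G$ obtained after stabilisation is precisely what is required to split off a $\ker(n^N)$-summand from $g$.
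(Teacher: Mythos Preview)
Your argument is correct and takes essentially the same route as the paper: both name the element and apply Theorem~\ref{dccm} to the descending chain of $n^k$th powers, using the stabilisation to produce the splitting $g = nh + (g - nh)$ with $n^N(g-nh)=0$. The only differences are cosmetic---the paper argues by contradiction, and its exponents $kn$, $(k+1)n$ should be read as $n^k$, $n^{k+1}$ for the computation $(y^{-n}x)^{n^k}=1$ and the nesting of the chain to go through.
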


\begin{proof}For a contradiction, let us suppose that there be an element $x$ and a natural number $n$ such that $xz\notin G^n$ for any $z$ having finite order. If there is some $y$ in $G$ and some natural number $k$ such that $x^{kn}=y^{(k+1)n}$, this yields $x=y^n(y^{-n}x)$ with $(y^{-n}x)^{kn}=1$, a contradiction. Then, for every natural number $k$, one has $x^{kn}\in G^{kn}\setminus G^{(k+1)n}$. This implies that the chain $G\cap acl(x)>G^n\cap acl(x)>G^{2n}\cap acl(x)>\cdots$ is strictly decreasing and contradicts the weakly small chain condition.\end{proof}

In an abelian group, every divisible group is a direct summand \cite[Theorem~1]{Bae}. This may not be true for a central divisible subgroup of an arbitrary group, even if the ambient group is nilpotent. For instance, consider the subgroup of~$GL_3(\mathbf{C})$ the elements of which are upper triangular matrices with $1$ entries on the main diagonal~; it is a nilpotent group whose centre $Z$ is divisible, isomorphic to~$\mathbf{C}^\times$, but $Z$ is no direct summand. However, we claim the following~:

\begin{prop}\label{Ba2}Let $G$ be a group, and~$D$ a divisible subgroup of the centre. There exists a subset $A$ of~$G$, invariant under conjugation and containing every power of its elements, with in addition $$G=D\cdot A\hspace{.5cm}\text{and}\hspace{.5cm} D\cap A=\{1\}$$\end{prop}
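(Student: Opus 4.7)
The plan is to apply Zorn's lemma to the collection $\mathcal{F}$ of subsets $B$ of $G$ which (i) are invariant under conjugation by $G$, (ii) contain every integer power of each of their elements, and (iii) satisfy $B\cap D=\{1\}$. This family is non-empty (it contains $\{1\}$) and stable under unions of chains, so Zorn yields a maximal element $A\in\mathcal{F}$. The conditions in $\mathcal{F}$ already give every property demanded of $A$ except $G=D\cdot A$, so it suffices to establish this equality.

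Suppose, for a contradiction, that some $g\in G\setminus DA$ exists. For each $d\in D$, let $T_d$ denote the smallest subset of $G$ containing $dg$ and invariant under both conjugation and integer powers. Since $D$ lies in the centre, $(dg)^k=d^k g^k$ and $(d^k g^k)^x=d^k(g^k)^x$; iterating, one checks that raising such an expression to an $m$-th power gives $d^{km}(g^{km})^x$ and conjugating by $y$ gives $d^k(g^k)^{xy}$, so any further composition of the two operations keeps the expression in the same shape. Hence
\[
T_d=\{\,d^k(g^k)^x : k\in\mathbb{Z},\ x\in G\,\}.
\]

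The heart of the argument is to select $d$ so that $T_d\cap D=\{1\}$. Since $d^k\in D$ and $D$ is normal in $G$, the element $d^k(g^k)^x$ lies in $D$ if and only if $g^k\in D$. The set $\{k\in\mathbb{Z}:g^k\in D\}$ is a subgroup of $\mathbb{Z}$. If it is $\{0\}$, take $d=1$. Otherwise it equals $n\mathbb{Z}$ for some least $n>0$; writing $e=g^n\in D$, the elements of $T_d\cap D$ are exactly those of the form $d^{ns}e^s=(d^n e)^s$ with $s\in\mathbb{Z}$. Divisibility of $D$ furnishes some $d\in D$ with $d^n=e^{-1}$, and then $(d^ne)^s=1$ for every $s$, so $T_d\cap D=\{1\}$.

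Since $g\notin DA$, the element $dg$ is not in $A$, so $A\cup T_d$ properly extends $A$. The union is clearly conjugation-invariant and power-closed, and $(A\cup T_d)\cap D=(A\cap D)\cup(T_d\cap D)=\{1\}$, whence $A\cup T_d\in\mathcal{F}$, contradicting the maximality of $A$. Therefore $G=D\cdot A$, which completes the argument. The only delicate step is pinning down the explicit shape of $T_d$; this is what forces the use of centrality of $D$, while divisibility is used once, exactly to adjust $dg$ so that its $n$-th power becomes trivial.
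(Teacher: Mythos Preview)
Your proof is correct and follows essentially the same strategy as the paper: apply Zorn's lemma to obtain a maximal $A$, then given $g\notin DA$ use divisibility of $D$ to replace $g$ by some $dg$ whose power-closure meets $D$ trivially, contradicting maximality. The only cosmetic differences are that you build conjugation-invariance into the Zorn poset from the start (the paper omits this and recovers it from maximality), and you exhibit the violating set $A\cup T_d$ directly, whereas the paper reaches the contradiction via a minimal-$n$ argument applied twice.
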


\begin{proof}If $A_1\subset A_2\subset\cdots$ is an increasing chain of subsets each of which contains all its powers and such that $A_i\cap D$ is trivial, then $\bigcup A_i$ still contains all its powers and~$\bigcup A_i\cap D$ is trivial too. By Zorn's~Lemma there is a maximal subset $A$ with these properties. We show that $D\cdot A$ equals $G$. Otherwise, there exists an $x$ not in $D\cdot A$. By maximality of~$A$, there is a natural number~$n$ greater than $1$, and some $d$ in $D$ so that $x^n$ equals $d$. We may choose $n$ minimal with this property. Let $e$ be an $n$th root of~$d^{-1}$ in $D$, and let $y$ equal $xe$. Then $y^n$ equals one, and~$y$ is not in $D\cdot A$. But the set of powers of~$y$ intersects $D$ by maximality of~$A$~: there is some natural number $m<n$ such that $y^{m}$ lie in $D$, and so does $x^m$, a contradiction with the choice of $n$.\end{proof}

In \cite[Nesin]{Nes}, it is shown that an omega-stable nilpotent group is the central product of a definable group with one of bounded exponent. We show that this also holds for a small nilpotent group. Recall that a group $G$ is \emph{the central product of two of its normal subgroups}, if it is the product of these subgroups and if moreover their intersection lies in the centre of~$G$. For a group $G$ and a subset $A$ of~$G$, we shall write $A^n$ for the set of the $n$th-powers of~$A$, and~$G'$ for the derived subgroup of~$G$. The following algebraic facts about nilpotent groups can be found in \cite[Chapter~1]{BN}.

\begin{fact}\label{com}In a nilpotent group, any divisible subgroup commutes with elements of finite order.\end{fact}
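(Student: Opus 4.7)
The plan is to argue by induction on the nilpotency class $c$ of $G$. The base case $c=1$ is immediate since $G$ is then abelian.

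For the inductive step, suppose $G$ has class $c\geq 2$, let $D$ be a divisible subgroup of $G$, and let $x\in G$ have finite order $n$. I would pass to the quotient $\bar G=G/Z(G)$, which has nilpotency class $c-1$. The image $\bar D$ of $D$ remains divisible (a surjective homomorphism carries divisible groups to divisible groups), and the image $\bar x$ still has finite order dividing $n$. Applying the induction hypothesis inside $\bar G$ gives $[\bar d,\bar x]=\bar 1$ for every $d\in D$; equivalently, $[d,x]\in Z(G)$ for every $d\in D$.

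To finish, I would exploit divisibility to write $d=e^n$ with $e\in D$, and use the standard commutator identities $[a^k,b]=[a,b]^k$ and $[a,b^k]=[a,b]^k$, both of which are valid as soon as $[a,b]$ lies in the centre (so that the factors in the expansion of $[a b,c]$ and $[a,b c]$ collapse). Since the previous step guarantees $[e,x]\in Z(G)$, these identities apply and yield
$$[d,x]=[e^n,x]=[e,x]^n=[e,x^n]=[e,1]=1.$$

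The only real bookkeeping is to make sure the power-of-commutator identities are only invoked \emph{after} the inductive step has placed the commutator in $Z(G)$; once that is in place, divisibility of $D$ absorbs the finite order of $x$ and the conclusion is immediate.
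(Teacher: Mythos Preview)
Your argument is correct. The induction on the nilpotency class, the passage to $G/Z(G)$, and the use of the identities $[a^k,b]=[a,b]^k$ and $[a,b^k]=[a,b]^k$ once $[a,b]$ is known to be central are all sound; the key point you flag---that $[e,x]\in Z(G)$ follows from the inductive step applied to $e\in D$ before you invoke the power identities---is exactly what makes the computation go through.

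There is nothing to compare against in the paper itself: the author does not prove this statement but records it as a standard fact about nilpotent groups, referring the reader to Chapter~1 of Borovik--Nesin, \emph{Groups of finite Morley rank}. Your proof is a clean self-contained justification of that quoted fact, and is essentially the classical argument one finds in such references.
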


%\begin{fact}\label{Div}A nilpotent group $G$ is divisible if and only if $G/G'$ is so.\end{fact}

\begin{fact}\label{wagcon}Let $G$ be a nilpotent group of nilpotent class~$c$. If $G/G'$ has exponent~$n$, the exponent of~$G$ is a natural number dividing~$n^c$.\end{fact}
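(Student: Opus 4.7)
The plan is to work along the lower central series $G = \gamma_1(G) \geq \gamma_2(G) \geq \cdots \geq \gamma_c(G) \geq \gamma_{c+1}(G) = 1$, with $\gamma_{i+1}(G) = [G, \gamma_i(G)]$ and $\gamma_2(G) = G'$. I would first show that every abelian factor $\gamma_i(G)/\gamma_{i+1}(G)$ has exponent dividing $n$, and then iterate $n$th powers down the series to place $g^{n^c}$ in $\gamma_{c+1}(G) = 1$.

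For the first step I would argue by induction on $i$. The base case $i = 1$ is the hypothesis. For the inductive step, $\gamma_{i+1}(G)/\gamma_{i+2}(G)$ is abelian and generated by the cosets of the basic commutators $[g,h]$ with $g \in G$ and $h \in \gamma_i(G)$. Repeated use of $[xy,z] = [x,z]^y\,[y,z]$, together with the fact that $[[g,h], g] \in [\gamma_{i+1}(G), G] = \gamma_{i+2}(G)$, yields the congruence
\[
[g^n, h] \equiv [g, h]^n \pmod{\gamma_{i+2}(G)}.
\]
By hypothesis $g^n \in G' = \gamma_2(G)$, and the standard nesting $[\gamma_2(G), \gamma_i(G)] \subseteq \gamma_{i+2}(G)$ gives $[g^n, h] \in \gamma_{i+2}(G)$. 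Hence $[g,h]^n \in \gamma_{i+2}(G)$, so each generator of the abelian group $\gamma_{i+1}(G)/\gamma_{i+2}(G)$ has trivial $n$th power, proving the inductive claim.

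For the second step I would show by induction on $k$ that $g^{n^k} \in \gamma_{k+1}(G)$ for every $g \in G$. The case $k = 0$ is immediate; assuming the claim for $k$, the previous step applied to the layer $\gamma_{k+1}(G)/\gamma_{k+2}(G)$ gives $(g^{n^k})^n = g^{n^{k+1}} \in \gamma_{k+2}(G)$. Taking $k = c$ produces $g^{n^c} \in \gamma_{c+1}(G) = 1$, so $G$ has exponent dividing $n^c$. The delicate point, and the main obstacle, is the commutator congruence $[g^n, h] \equiv [g, h]^n \pmod{\gamma_{i+2}(G)}$: one must carefully check that every conjugation twist produced while expanding $[g^n, h]$ via $[xy,z] = [x,z]^y[y,z]$ lands in $\gamma_{i+2}(G)$, but this follows at once from $[[g,h], g] \in \gamma_{i+2}(G)$, and the induction on $n$ then closes without difficulty.
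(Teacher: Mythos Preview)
The paper does not supply a proof of this statement; it is recorded as a \emph{Fact} with a reference to Chapter~1 of Borovik--Nesin, so there is nothing to compare against directly. Your argument is the standard one and is correct: the commutator congruence $[g^n,h]\equiv[g,h]^n\pmod{\gamma_{i+2}(G)}$ follows exactly as you say from $[xy,z]=[x,z]^y[y,z]$ and $[[g,h],g]\in\gamma_{i+2}(G)$, and combined with $[\gamma_2(G),\gamma_i(G)]\subseteq\gamma_{i+2}(G)$ it forces each factor $\gamma_i(G)/\gamma_{i+1}(G)$ to have exponent dividing~$n$; the descent $g^{n^k}\in\gamma_{k+1}(G)$ is then immediate.

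One minor remark: although you frame the first step as an induction on~$i$, the argument you actually give for the layer $\gamma_{i+1}(G)/\gamma_{i+2}(G)$ uses only the hypothesis on $G/G'$ together with the inclusion $[\gamma_2(G),\gamma_i(G)]\subseteq\gamma_{i+2}(G)$, and never invokes the claim for any smaller index. So the induction is superfluous there; each layer is handled directly from the base case.
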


\begin{prop}\label{p1}Let $G$ be a nilpotent small group, and~$D$ a divisible subgroup containing $G^n$ for some non-zero natural number $n$. Then $G$ equals the product $D\cdot F$ where the group $F$ has bounded exponent.\end{prop}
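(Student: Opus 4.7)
The plan is to induct on the nilpotent class $c$ of $G$, the main trick being to isolate the divisible part of the centre $Z(G)$ as the central divisible subgroup to feed into Proposition \ref{Ba2}, applied not to $G$ directly but to a suitable subgroup inherited from the induction on $G/Z(G)$.

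For the base case $c=1$, that is $G$ abelian, Baer's theorem (quoted just before Proposition \ref{Ba2}) decomposes $G$ as a direct sum $G=D\oplus E$. Since $e^n$ lies in $E\cap G^n\subseteq E\cap D=\{1\}$ for every $e\in E$, the subgroup $E$ has exponent dividing $n$, and one sets $F:=E$. For the inductive step with $c\geq 2$, note that the centre $Z:=Z(G)$ is a small abelian group, so by Fact \ref{gpabel} it admits a decomposition $Z=Z_D\oplus Z_B$ with $Z_D$ divisible and $Z_B$ of bounded exponent $k$. Since $Z_D=n\cdot Z_D\subseteq G^n\subseteq D$, the divisible central subgroup $Z_D$ lies inside $D$. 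Applying the inductive hypothesis to the small nilpotent group $G/Z$ of class at most $c-1$, with the divisible subgroup $DZ/Z$ containing $(G/Z)^n$, one obtains $G/Z=(DZ/Z)\cdot \bar F$ with $\bar F$ a subgroup of bounded exponent $m$. Letting $H:=\pi^{-1}(\bar F)$, a subgroup of $G$ containing $Z$ with $H^m\subseteq Z$, one has $G=DZ\cdot H=D\cdot H$, the last equality using $Z=Z_D\cdot Z_B$ together with $Z_D\subseteq D$ and $Z_B\subseteq Z\subseteq H$.

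It remains to extract from $H$ a bounded-exponent subgroup $F$ with $G=D\cdot F$. For any $h\in H$ decompose $h^m=z_d z_b$ with $z_d\in Z_D$ and $z_b\in Z_B$; since $Z$ is abelian, $h^{mk}=z_d^k z_b^k=z_d^k\in Z_D$, showing $H^{mk}\subseteq Z_D$. Apply Proposition \ref{Ba2} to $H$ together with the central divisible subgroup $Z_D\subseteq Z(G)\subseteq Z(H)$ to obtain $H=Z_D\cdot B$ where $B$ is closed under $H$-conjugation and under powers, with $B\cap Z_D=\{1\}$. Each $b\in B$ satisfies $b^{mk}\in H^{mk}\cap B\subseteq Z_D\cap B=\{1\}$, so $B$ has exponent dividing $mk$. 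Setting $F:=\langle B\rangle$, a normal subgroup of $H$, the abelianisation $F/F'$ is generated by images of $B$ and has exponent dividing $mk$; Fact \ref{wagcon} then bounds the exponent of $F$ by $(mk)^c$. Finally, $G=D\cdot H=D\cdot Z_D\cdot B=D\cdot B\subseteq D\cdot F$ gives the desired equality $G=D\cdot F$.

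The main obstacle is recognising that Proposition \ref{Ba2} should not be applied directly to $(G,D)$, since $D$ need not be central, but rather to $(H,Z_D)$, where $H$ is the subgroup produced by the induction on $G/Z$ and $Z_D$ is the divisible part of $Z(G)$ extracted by Fact \ref{gpabel}. The key algebraic observation making the bound on exponents work is that even though $H^m$ only lands a priori in $Z$, one further power by $k$ kills the bounded-exponent summand $Z_B$ and lands the result in $Z_D$, which is inside $D$.
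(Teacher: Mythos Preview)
Your proof is correct and follows essentially the same route as the paper's: induct on the nilpotency class, split the centre via Fact~\ref{gpabel} as $Z_D\oplus Z_B$, apply the inductive hypothesis to $G/Z$ to obtain a bounded-exponent piece whose preimage $H$ satisfies $H^{mk}\subseteq Z_D$, then use Proposition~\ref{Ba2} on the pair $(H,Z_D)$ and finish with Fact~\ref{wagcon}. The only differences are notational (your $H,Z_D,Z_B,k,B$ are the paper's $C,D_0,F_0,l,A$), and you make explicit a couple of points (e.g.\ $Z_D\subseteq D$ and the computation $h^{mk}=z_d^k$) that the paper leaves implicit.
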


\begin{proof}Note that since $D$ is divisible and $G^n\subset D$, we get $G^n= D$. By induction on the nilpotency class of~$G$. If $G$ is abelian, Baer's Theorem~\cite{Bae} concludes. Suppose that the result holds for any small nilpotent group of class~$c$, and that $G$ is nilpotent of class~$c+1$, and let $Z(G)$ be the centre of~$G$. The group $G/Z(G)$ is nilpotent of class~$c$. The quotient $(D\cdot Z(G))/Z(G)$ is a divisible subgroup and contains $\big(G/Z(G)\big)^n$. By induction hypothesis, $G/Z(G)$ equals the product $\big(D\cdot Z(G)/Z(G)\big)\cdot \big(C/Z(G)\big)$ with $C/Z(G)$ of finite exponent, say $m$. On the other hand, the centre is the sum of a divisible subgroup $D_0$ with a subgroup $F_0$ of finite exponent, say $l$. So $C^{lm}$ is included in $D_0$. By Proposition~\ref{Ba2}, there is some set $A$ closed under power operation, such that $C=D_0\cdot A$ and~$D_0\cap A=\{1\}$~; but $A^{lm}$ is included in $D_0\cap A$, so $A$ has finite exponent, and $$G=D\cdot Z(G)\cdot D_0\cdot A=(D\cdot D_0)\cdot (F_0\cdot A)$$ Note that since we have $D_0\subset G^n=D$, we get $G=D\cdot B$ where $B$ is a set having finite exponent. Let $F$ be the group generated by $B$. The abelian group $F/F'$is generated by $(B\cdot F')/F'$ and has bounded exponent. Fact~\ref{wagcon} implies that $F$ has bounded exponent.%As the Sylow $p$-subgroups are normal in $G$, the subgroup generated by $B$ has bounded exponent.
\end{proof}

\begin{thm}A small nilpotent group is the central product of a definable divisible group with a definable one of bounded exponent.\end{thm}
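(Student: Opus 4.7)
The plan is to induct on the nilpotency class $c$ of $G$. The base case $c=1$ is Fact \ref{gpabel}, whose first remark guarantees that the bounded-exponent summand $F$ is contained in a definable bounded-exponent subgroup, which provides the required definable second factor.

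For $c>1$, the quotient $G/Z(G)$ is small and nilpotent of class $c-1$, so the induction hypothesis gives a central-product decomposition $G/Z(G)=\bar D\cdot\bar F$ with $\bar D$ definable divisible and $\bar F$ definable of bounded exponent $m$. Pulling back through $\pi:G\to G/Z(G)$ yields definable subgroups $\hat D=\pi^{-1}(\bar D)$ and $\hat F=\pi^{-1}(\bar F)$ containing $Z(G)$, with $G=\hat D\cdot\hat F$. Applying Fact \ref{gpabel} to the small abelian centre produces $Z(G)=D_0\oplus F_0$ with $D_0$ definable divisible and $F_0$ inside a definable central subgroup of bounded exponent $\ell$. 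Since $\hat F/Z(G)$ has exponent $m$ and $Z(G)/D_0\cong F_0$ has exponent $\ell$, one has $\hat F^{m\ell}\subseteq D_0$, so Proposition \ref{p1} applied inside the small nilpotent group $\hat F$ with the central divisible $D_0$ yields $\hat F=D_0\cdot F$ for some bounded-exponent subgroup $F$. Hence $G=\hat D\cdot F$, using $D_0\subseteq Z(G)\subseteq\hat D$. A dual treatment of $\hat D$ — exploiting that $\hat D/Z(G)$ and $Z(G)/F_0\cong D_0$ are both divisible, together with the fact that in the nilpotent ambient group a central extension of a divisible group by a divisible one remains divisible — upgrades $\hat D$ to a genuinely divisible definable subgroup $D$ with $G=D\cdot F$. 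The central-product condition $D\cap F\subseteq Z(G)$ is then automatic via Fact \ref{com}: $F$ consists of bounded-exponent torsion elements, so $D$ centralises $F$ entirely, giving $[D,F]=1$ and hence $D\cap F\subseteq Z(G)$.

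The principal obstacle is rendering the bounded-exponent factor $F$ genuinely \emph{definable}, since Proposition \ref{p1} produces it only through the Zorn's-lemma construction of Proposition \ref{Ba2}, which confers no definability. My proposal is to build $F$ directly via Fact \ref{gpabel} applied to the small abelian abelianisation $\hat F/[\hat F,\hat F]$: its bounded-exponent part lies inside a definable bounded-exponent subgroup whose preimage in $\hat F$ is a definable subgroup $F$ containing $[\hat F,\hat F]$. A further iterative refinement — applying Fact \ref{gpabel} to $F/F'$ to eliminate any residual divisible summand of that abelianisation — ensures $F/F'$ itself has bounded exponent; Fact \ref{wagcon} then transfers bounded exponent from the abelianisation to the whole nilpotent group $F$. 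A comparable definability concern for the divisible factor $D$ is handled by the same mix of Fact \ref{gpabel} (on suitable abelianisations) together with the structural principle that central extensions of divisible groups by divisible groups remain divisible in the nilpotent setting.
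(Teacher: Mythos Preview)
Your inductive skeleton matches the paper's, and you correctly locate Proposition~\ref{p1} and Fact~\ref{gpabel} as the engines. But both of the ``upgrading'' steps you flag as obstacles are genuinely unresolved in your sketch, and the paper handles them by concrete constructions you are missing.

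\textbf{The divisible factor.} Your ``dual treatment'' only establishes that $\hat D/F_0$ is divisible (central extension of $D_0$ by $\bar D$). That is a quotient, not a subgroup of $G$, and it does not by itself produce a definable divisible $D\leq G$; finding a complement to $F_0$ inside $\hat D$ is exactly the kind of splitting that Proposition~\ref{Ba2} achieves only non-definably. The paper's key idea here is explicit: set $D=\hat D^{\,2\ell}\cdot D_0$ (in your notation). Divisibility is easy; the nontrivial point is that this set is a \emph{subgroup}, proved via the identity
\[
a^{2\ell}b^{2\ell}=(ab)^{2\ell}\,z^{\ell(2\ell-1)}\in (ab)^{2\ell}\cdot D_0,
\]
where $ab=baz$ with $z\in Z(G)=D_0\oplus F_0$ and $z^\ell\in D_0$. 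This computation is the substance your ``dual treatment'' lacks.

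\textbf{The bounded-exponent factor.} Your proposal to apply Fact~\ref{gpabel} to $\hat F/[\hat F,\hat F]$ presupposes that the derived subgroup $[\hat F,\hat F]$ is definable, which is not granted; without that, the quotient is not an interpretable small group and the argument does not get off the ground (the ``iterative refinement'' has the same defect). The paper avoids this entirely: once Proposition~\ref{p1} gives $G=D\cdot B$ with $B$ of exponent~$p$, one simply replaces $B$ by the definable set $\{x\in G:x^p=1\}$. That this set is a (normal) subgroup follows from Fact~\ref{com}: $D$ centralises all torsion, so writing $x=db$ with $[d,b]=1$ reduces closure under products to the torsion of $D$, which again Fact~\ref{com} forces to be central in $D$.
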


\begin{proof}If $G$ is a small abelian group, it is the direct product of a divisible definable group $D$ and of one group $F$ of finite exponent $n$ by Fact~\ref{gpabel}. So it is the product of $D$ and the definable group of every elements of order $n$. By induction on the nilpotency class, if $G$ is nilpotent of class~$c+1$, then $G/Z(G)$ is the central sum of some divisible definable normal subgroup $A/Z(G)$ and some group $B/Z(G)$ of finite exponent~$n$. Besides, $Z(G)$ equals $D_0\oplus F_0$ where $F_0$ has exponent $m$ and~$D_0$ is definable and divisible. We write $D$ for $A^{2m}\cdot D_0$.\begin{claim}$D$ is a definable divisible normal subgroup of~$G$.\end{claim}

\renewcommand{\proofname}{Proof of Claim}
\begin{proof}Let $x$ be an element in $A$ and $q$ a natural number. As As $A/Z(G)$ is divisible there is some $y$ in $A$ with $x^{-1}y^q$ in $Z(G)$. Then $x^{-2m}(y^{2m})^{q}$ is in $D_0$. As $D_0$ is central and divisible, this proves that $A^{2m}\cdot D_0$ is a divisible part. Let us show that it is a group. Let $a,b$ be in $A$. As $A/Z(G)$ is normal in $G/Z(G)$, there exists a central element $z$ such that $ab=baz$. Moreover, we have $z=d_0f_0$ for some $d_0$ in $D_0$ and~$f_0^m=1$. We obtain $$a^{2m}b^{2m}=(ab)^{2m}z^{(2m-1)+(2m-2)+\cdots+1}=(ab)^{2m}z^{m(2m-1)}=(ab)^{2m}d_0^{m(2m-1)}$$
%=aba^{2m-1}b^{2m-1}z^{2m-1}=
$A$ is a subgroup of $G$ so $ab$ is in $A$, and $a^{2m}b^{2m}$ belongs to~$A^{2m}\cdot D_0$. A similar argument shows that $D$ is normal in $G$.\end{proof}
\renewcommand{\proofname}{Proof}
%The abelian group $D/D'$ generated by $(D_1\cdot D')/D'$ is divisible, and so is $D$ by Fact~\ref{Div}~; so $D$ equals $D_1$???.
By Fact~\ref{com}, the set $G^{2mn}$ is included in $D$, so we may apply Proposition~\ref{p1}~: there is a group $B$ of bounded exponent $p$ such that $G=D\cdot B$. We may assume $B$ to be definable and normal by replacing it with the set $\{x\in G:x^p=1\}$ (the fact that $\{x\in G:x^p=1\}$ is a normal subgroup of~$G$ follows from Fact~\ref{com}).\end{proof}

\section{Groups in a small and simple theory}

We shall not define what a simple structure is, but refer the interested reader to~\cite[Wagner]{WS}. We just recall the uniform descending chain condition up to finite index in a group with simple theory~:

\begin{fact}\label{dccs}\emph{(Wagner \cite[Theorem~4.2.12]{WS})} In a group with simple theory, a descending chain of intersections of a family $H_1,H_2\dots$ of subgroups defined respectively by formulae $f(x,a_1),f(x,a_2)\dots$ where $f(x,y)$ is a fixed formula, becomes stationary after finitely many steps, up to finite index.\end{fact}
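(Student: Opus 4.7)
The plan is to prove this by contradiction, exploiting the finiteness of the local dividing rank $D(\,\cdot\,,f,k)$ in any simple theory. Suppose the intersections $K_n = H_1\cap\cdots\cap H_n$ fail to become stationary up to finite index. By passing to a subsequence one may assume that $[K_{n-1}:K_n]=\infty$ for every $n\geq 2$.

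Next I would extract indiscernibility. By Ramsey's theorem together with compactness, replace the parameter sequence $(a_n)$ by an infinite indiscernible sequence $(b_n)_{n<\omega}$ over a small base, realizing the same ``chain with infinite-index drops'' behaviour: the intersections $K'_n=\bigcap_{i\leq n}f(G,b_i)$ still satisfy $[K'_{n-1}:K'_n]=\infty$ at every stage. This is legitimate because ``$[K'_{n-1}:K'_n]\geq m$'' is expressible by a first-order formula in the parameters, so the partial type recording all these bounds simultaneously is preserved under Ramsey extraction.

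The decisive step is the translation into an instance of dividing. Because $[K'_{n-1}:K'_n]$ is infinite, inside $K'_{n-1}$ one finds a sequence of elements $(g_i)_{i<\omega}$ lying in pairwise distinct left cosets of $f(G,b_n)$; by left-translation invariance this yields, for a generic realisation $x$ of $K'_{n-1}$, a sequence of instances $f(g_i^{-1}x,b_n)$ that is $2$-inconsistent. Combining this with the indiscernibility of $(b_n)$ produces an honest $2$-dividing array of $f$-instances, so that the formula $f(x,b_n)$ strictly drops the local rank $D(\,\cdot\,,f,2)$ on the partial type $\{x\in K'_{n-1}\}$. Iterating, one obtains an infinite strictly decreasing sequence of $D(\,\cdot\,,f,2)$-values, contradicting the finiteness of this local rank in a simple theory.

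The main obstacle is the translation step: one must use the group structure carefully to convert ``infinite index'' (a geometric statement about cosets) into the ``$k$-inconsistency on an indiscernible sequence'' that defines dividing. In particular, the switch from distinct cosets of the fixed subgroup $f(G,b_n)$ to a $2$-inconsistent array indexed by the sequence $(b_j)_{j\geq n}$ requires the $b_j$'s to be suitably generic over the representatives $g_i$, which is arranged by a standard choice of a Morley-type sequence over the appropriate base. Once this is set up, the usual local rank calculus in simple theories delivers the contradiction with no further combinatorics; note that the weakening ``up to finite index'' is forced on us, since simplicity controls dividing only up to bounded multiplicity, not exactly.
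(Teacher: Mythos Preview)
The paper does not supply a proof of this statement: it is recorded as a \emph{Fact} with a citation to Wagner's book \cite[Theorem~4.2.12]{WS}, and is used as a black box in the subsequent arguments. So there is no ``paper's own proof'' to compare against.

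That said, your sketch follows the standard route and is essentially correct, with one imprecision worth flagging. The $2$-inconsistent family you produce is $\{f(g_i^{-1}x,b_n)\}_i$, whose parameters are the pairs $(g_i,b_n)$, not the $b_n$ alone. Consequently the local rank that must strictly drop along the chain is $D(\,\cdot\,,\psi,2)$ for the coset formula $\psi(x;g,b):=f(g^{-1}x,b)$, not $D(\,\cdot\,,f,2)$ as you write. Once you track the correct formula, the argument is clean: choose coset representatives $g_i\in K'_{n-1}$ in pairwise distinct cosets of $K'_n$, extract an indiscernible sequence of such $g_i$ over $b_1,\dots,b_n$, and observe that $\psi(x;g_0,b_n)$ then $2$-divides over the parameters of $K'_{n-1}$; translation by $g_0$ is a definable bijection, so $D(K'_n,\psi,2)=D(g_0K'_n,\psi,2)<D(K'_{n-1},\psi,2)$. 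Your attempt to reroute the inconsistency through the sequence $(b_j)_{j\geq n}$ itself is unnecessary and is the murkiest part of the write-up; infinite index of $K'_n$ in $K'_{n-1}$ does not by itself force the family $\{K'_{n-1}\cap f(G,b_j):j\geq n\}$ to be $k$-inconsistent, so the detour through coset representatives (and hence through $\psi$) is not optional.
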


Recall that two subgroups of a given group are \emph{commensurable} if the index of their intersection is finite in both of them.

\begin{fact}\label{Sch}\emph{(Schlichting \cite{Sch, WS})} Let $G$ be a group and~$\mathfrak{H}$ a family of uniformly commensurable subgroups. There is a subgroup $N$ of~$G$ commensurable with members of~$\mathfrak{H}$ and invariant under the action of the automorphisms group of~$G$ stabilising the family $\mathfrak{H}$ setwise. The inclusions $\bigcap_{H\in\mathfrak{H}}\subset N\subset \mathfrak{H}^4$ hold. Moreover, $N$ is a finite extension of a finite intersection of elements in $\mathfrak H$. In particular, if $\mathfrak{H}$ consists of definable groups then $N$ is also definable.\end{fact}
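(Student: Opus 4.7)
The statement is the classical Schlichting commensurator theorem in its Bergman--Lenstra family form, a purely group-theoretic result independent of any model theory except for the last clause on definability. My plan is to reproduce the standard argument as it appears in Wagner's book.

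Let $n$ be the uniform commensurability bound, so $[H:H\cap K]\leq n$ and $[K:H\cap K]\leq n$ for all $H,K\in\mathfrak{H}$. First I would produce, by an extremal argument, a finite intersection $H^{*}=H_{1}\cap\cdots\cap H_{k}$ of members of $\mathfrak{H}$ such that $[H^{*}:H^{*}\cap H]$ is uniformly bounded for $H\in\mathfrak{H}$, so that further intersection with members of $\mathfrak{H}$ cannot significantly refine $H^{*}$. A direct maximisation of the indices $[H_{0}:H_{0}\cap H^{*}]$ does not work since these can grow unboundedly in general (take for instance the infinitely many index-two subgroups of $\bigoplus_{\mathbb{N}}\mathbb{Z}/2$, whose total intersection is trivial); one must instead exploit the bounded action of each $H\in\mathfrak{H}$ on the coset space $G/H_{0}$, and apply pigeonhole at the level of orbits, each of which has size at most $n$.

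Second, any automorphism $\sigma$ of $G$ stabilising $\mathfrak{H}$ setwise sends $H^{*}$ to another finite intersection of members of $\mathfrak{H}$, commensurable with $H^{*}$ with a bound depending only on $n$ and $k$. Hence the family $\{\sigma(H^{*}):\sigma\in\mathrm{Aut}(G)_{\mathfrak{H}}\}$ is uniformly commensurable. A Neumann-type argument --- pairwise commensurable subgroups admit a common subgroup of finite index, and the uniform bound allows a finite sub-intersection to already suffice --- then yields a finite-index subgroup $N$ of $H^{*}$ that is invariant under $\mathrm{Aut}(G)_{\mathfrak{H}}$ and commensurable with every $H\in\mathfrak{H}$.

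The inclusion $\bigcap_{H\in\mathfrak{H}}H\subseteq N$ is automatic from the construction. The bound $N\subseteq\mathfrak{H}^{4}$ comes from writing $N$ as a finite union of cosets of $H^{*}$ and checking that each coset representative may be chosen as a product of at most four elements from members of $\mathfrak{H}$ (tracking through the Neumann intersection step and the definition of $H^{*}$). Definability of $N$ when $\mathfrak{H}$ consists of definable groups is immediate, since $H^{*}$ is a finite intersection of definable groups, hence definable, and $N$ is a finite extension of it. The main obstacle will be the first step --- the stabilisation argument for $H^{*}$ --- which is the combinatorial heart of Schlichting's theorem and is precisely where the \emph{uniformity} of the commensurability bound is genuinely used.
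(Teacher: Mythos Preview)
The paper does not give its own proof of this statement: it is recorded as a \emph{Fact} with attribution to Schlichting and to Wagner's book, and is simply quoted and used. So there is nothing in the paper to compare your argument against; your proposal is essentially a sketch of the proof one finds in the cited references, which is exactly what is intended.

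One small internal inconsistency in your sketch is worth flagging. You first say the Neumann-type step yields ``a finite-index subgroup $N$ of $H^{*}$'', but two sentences later you describe $N$ as ``a finite union of cosets of $H^{*}$'' and invoke the clause that $N$ is a finite \emph{extension} of a finite intersection. These cannot both hold: if $N\leq H^{*}$ then $N$ sits inside a single coset of $H^{*}$. In the actual Bergman--Lenstra construction (as presented in Wagner's book), $N$ is obtained not by intersecting further inside $H^{*}$ but by taking a suitable finite extension of a finite intersection of members of $\mathfrak{H}$, and it is this that gives both the invariance and the inclusion $N\subset\mathfrak{H}^{4}$. Your first step (stabilising a finite intersection up to bounded index) and your diagnosis of where uniformity is genuinely used are correct; just be careful in the second step that the invariant $N$ is built \emph{above} the stabilised intersection, not below it.
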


%\begin{rmq}\label{N}As it will soon be useful, we recall more precisely how such an $N$ is defined. For any finite intersection $I$ of elements of $\mathfrak{H}$ we write $m(I)$ the cardinal $sup\{|HI:H|:H\in\mathfrak{H}\}$. It is in fact a natural number. The group $N$ is the unique maximal group of the form $$\bigcap_{\underset{|HI:H|=m(I)}{H\in\mathfrak{H}}}\bigcap_{i\in I}(HI)^i$$ where $m(I)$ takes the minimal possible values among all finite intersections $I$.\end{rmq}

We now turn to small simple groups. The first step towards the existence of a definable finite-by-abelian subgroup is to appeal to Theorem~\ref{thm1}. Note that in a stable group, every set of pairwise commuting elements is trivially contained in a definable abelian subgroup. Shelah showed that this also holds in a dependent group \cite{She}. The second step is the following~:

\begin{prop}\label{absimp}In a group with simple theory, every abelian subgroup $A$ is contained in an $A$-definable finite-by-abelian subgroup.\end{prop}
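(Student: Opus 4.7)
The plan is to combine the descending chain condition for centralisers in a simple theory (Fact~\ref{dccs}) with Schlichting's theorem (Fact~\ref{Sch}), then to verify finiteness of the derived subgroup of the resulting definable group.

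First I would apply Fact~\ref{dccs} to the family of centralisers $\{C_G(a):a\in A\}$. The chain of successive intersections $C_G(a_1)\supseteq C_G(a_1)\cap C_G(a_2)\supseteq\cdots$ stabilises up to finite index; a uniformity (compactness) argument in the simple theory produces a finite $A_0\subseteq A$ and an integer $k$ such that $[H:H\cap C_G(a)]\le k$ for every $a\in A$, where $H:=C_G(A_0)$. Since $A$ is abelian, $A\subseteq H$, and $H$ is $A$-definable.

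Next I would apply Fact~\ref{Sch} to the uniformly commensurable family $\mathfrak H=\{H\cap C_G(a):a\in A\}$. This produces a definable subgroup $N\le H$ commensurable with every member of $\mathfrak H$, invariant under every automorphism of $H$ stabilising $\mathfrak H$ setwise, and containing a finite-index subgroup of the form $C_G(A_0\cup A_1)$ for some finite $A_1\subseteq A$. As $A$ is abelian, conjugation by any $a\in A$ fixes every member of $\mathfrak H$, so $A$ normalises $N$; and $A\subseteq C_G(A_0\cup A_1)\subseteq N$. Set $H^*:=N$, an $A$-definable subgroup of $G$ containing $A$.

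Finally, I would show $(H^*)'$ is finite. For $a\in A$, the conjugacy class $a^{H^*}$ has size $[H^*:C_{H^*}(a)]\le k'$ for a uniform $k'$ (using commensurability of $N$ with members of $\mathfrak H$). Exploiting the containment $N\subseteq\mathfrak H^4$ from Fact~\ref{Sch}---that is, every element of $H^*$ is a product of four elements each centralising some element of $A$---one would hope to propagate this bound to a uniform BFC-bound $|g^{H^*}|\le k''$ for every $g\in H^*$. B.H.~Neumann's theorem on BFC-groups then implies $(H^*)'$ is finite.

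The main obstacle is this final step: lifting the bound on the conjugacy classes of elements of $A$ in $H^*$ to a uniform bound on $|g^{H^*}|$ for arbitrary $g\in H^*$. This requires a careful commutator analysis exploiting the specific product decomposition of $N$ from Schlichting's theorem, together with the fact that $A$ is central in the type-definable intersection $\bigcap_{a\in A}C_G(a)$, which sits as a finite-index subgroup of $H^*$.
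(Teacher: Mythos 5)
Your first step matches the paper's: both use Fact~\ref{dccs} in a saturated model to find a finite intersection $H$ of centralisers of elements of $A$, minimal up to finite index, so that $A\leq H$ and $C_H(a)$ has finite index in $H$ for every $a\in A$. But from there the two arguments diverge, and the step you yourself flag as ``the main obstacle'' is a genuine gap that cannot be closed along the route you propose. The group $N$ produced by Schlichting's theorem from the family $\{H\cap C_G(a):a\in A\}$ is merely \emph{commensurable} with these centralisers; this bounds the conjugacy classes in $N$ of elements of $A$ (and more generally of elements of the intersection $C_H(A)$), but says nothing about $|g^N|$ for a general $g\in N$. Indeed $N$ is simply too big: if $A$ is central in $G$ (say $A=Z(G)$ for $G$ a copy of $SL_2$ over a pseudofinite field), then every $C_G(a)$ equals $G$, so $H=G$ and $N$ is a finite-index subgroup of $G$, which is not finite-by-abelian. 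No commutator analysis of the decomposition $N\subseteq\mathfrak H^4$ can rescue this, because the desired conclusion is false for $N$.

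The paper avoids this by cutting $H$ down further rather than applying Schlichting at all: it takes the \emph{almost centre} $Z^*(H)=\{h\in H:[H:C_H(h)]<\infty\}$, which still contains $A$ and consists \emph{by definition} only of elements with finite conjugacy class in $H$. The real work is then to show $Z^*(H)$ is definable: the condition ``$C_H(h)$ has finite index in $H$'' is rewritten via the $D(\cdot,\varphi,k)$-rank characterisation of finite index in simple theories (\cite[Lemma 4.1.15]{WS}), making $Z^*(H)$ type-definable; compactness and saturation then yield a uniform bound on the indices $[H:C_H(h)]$ for $h\in Z^*(H)$, whence $Z^*(H)$ is definable and has uniformly bounded conjugacy classes, and Neumann's theorem gives a finite derived subgroup. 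So the correct fix for your argument is to replace the Schlichting hull $N$ by the almost centre of $H$, and to supply the definability argument for it.
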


\begin{proof}Let $G$ be this group and $\mathfrak{C}$ a sufficiently saturated elementary extension of $G$. We work inside $\mathfrak{C}$. By Fact~\ref{dccs}, there exists a finite intersection $H$ of centralisers of elements in $A$ such that $H$ is minimal up to finite index. The group $H$ contains $A$, and the centraliser of every element in $A$ has finite index in $H$. Consider the \emph{almost centre} $Z^*(H)$ of~$H$ consisting of elements in $H$ the centraliser of which has finite index in $H$. We claim that $Z^*(H)$ is a definable group. It is a subgroup containing $A$. According to \cite[Lemma 4.1.15]{WS}, a definable subgroup $B$ of~$\mathfrak{C}$ has finite index in $\mathfrak{C}$ if and only if the equality $D_\mathfrak{C}(B,\varphi,k)= D_\mathfrak{C}(\mathfrak{C},\varphi,k)$ holds for every formula $\varphi$ and natural number $k$. So we have the following equality $$Z^*(H)=\{h\in H:D_\mathfrak{C}(C_H(h),\varphi,k)\geq D_\mathfrak{C}(H,\varphi,k),\varphi \ \text{formula},k\ \text{natural number}\}$$ Recall that for a partial type $\pi(x,A)$, the sentence "$D_\mathfrak{C}(\pi(x,A),\varphi,k)\geq n$" is a type-definable condition on $A$ as stated in~\cite[Remark 4.1.5]{WS}, so the group $Z^*(H)$ is type-definable. By compactness and saturation, centralisers of elements in $Z^*(H)$ have bounded index in $H$, and conjugacy classes in $Z^*(H)$ are finite of bounded size. The first observation implies that $Z^*(H)$ is definable, and the second one together with \cite[Theorem~3.1]{Neu} show that the derived subgroup of~$Z^*(H)$ is finite. Note that $H$ and $Z^*(H)$ are $A$-definable, hence $Z^*(H)$ computed in $G$ fulfills our purpose.\end{proof}

\begin{cor}\label{SS}A weakly small infinite group the theory of which is simple has an infinite definable finite-by-abelian subgroup.\end{cor}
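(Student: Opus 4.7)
The proof plan is essentially to chain together the two major results established earlier in the paper. First I would apply Theorem~\ref{thm2} to the weakly small infinite group $G$ in order to extract an infinite abelian subgroup $A \subseteq G$. Note that $A$ itself need not be definable (as Remark after Theorem~\ref{thm2} stresses, via Plotkin's examples), so this step alone is not enough.

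Next I would invoke Proposition~\ref{absimp}, which applies because $G$ has a simple theory: the infinite abelian subgroup $A$ is contained in an $A$-definable finite-by-abelian subgroup $H$. Since $H$ contains the infinite set $A$, it is itself infinite, and it is finite-by-abelian by construction. The one point requiring a brief comment is the meaning of ``$A$-definable'' when $A$ is infinite. Inspecting the proof of Proposition~\ref{absimp}, the group $H$ produced there is $Z^*(H_0)$, where $H_0$ is a \emph{finite} intersection of centralisers of elements of $A$ chosen minimal up to finite index (using Fact~\ref{dccs}); and $Z^*(H_0)$ is then definable over the same finitely many parameters (the type-definable condition becoming definable by compactness). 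Hence $H$ is in fact definable over a finite tuple from $A$, that is, definable in the usual sense.

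So the proposed proof is, in outline: apply Theorem~\ref{thm2} to obtain an infinite abelian $A \leq G$; apply Proposition~\ref{absimp} to obtain an $A$-definable finite-by-abelian $H \supseteq A$; observe from the construction of $H$ in the proof of Proposition~\ref{absimp} that only finitely many parameters from $A$ are needed, so $H$ is definable; and conclude that $H$ is an infinite definable finite-by-abelian subgroup of $G$.

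I do not expect any serious obstacle here, since all the machinery has been assembled: the existence of abelian subgroups in weakly small groups is precisely Theorem~\ref{thm2}, and the passage from an abelian subgroup to a definable finite-by-abelian envelope in a simple theory is precisely Proposition~\ref{absimp}. The only subtlety worth flagging in the write-up is the finiteness of the parameter set, which is immediate from the use of Fact~\ref{dccs} in the proof of Proposition~\ref{absimp}.
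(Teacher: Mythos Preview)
Your proposal is correct and follows exactly the paper's route: the paper's proof is the single line ``Follows from Theorem~\ref{thm2} and Proposition~\ref{absimp}.'' Your additional remark that the group produced in Proposition~\ref{absimp} is actually definable over a finite tuple from $A$ (via Fact~\ref{dccs}) is a welcome clarification, but does not depart from the paper's argument.
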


\begin{proof}Follows from Theorem~\ref{thm2} and Proposition~\ref{absimp}.\end{proof}

\begin{rmq}Corollary~\ref{SS} states the best possible result as there are $\aleph_0$-categorical simple groups without infinite abelian definable subgroups. For instance, infinite extra-special groups of exponent $p$ are $\aleph_0$-categorical \cite[Felgner]{Fel}, and supersimple of~$SU$-rank $1$ as they can be interpreted in an infinite dimensional vector space over~$\mathbf{F}_p$ endowed with a non degenerate skew-symmetric bilinear form. They have no infinite definable abelian subgroup by \cite[Plotkin]{Plo}.\end{rmq}

\begin{cor}A weakly small supersimple group of~$SU$-rank $1$ is finite-by-abelian-by-finite.\end{cor}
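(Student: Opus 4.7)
The plan is to combine Corollary~\ref{SS} with the classical fact that, in a supersimple group of $SU$-rank $1$, every infinite definable subgroup has finite index. By Corollary~\ref{SS}, $G$ contains an infinite definable finite-by-abelian subgroup $H$; its derived subgroup $F=H'$ is finite (the equivalent form of ``finite-by-abelian''). Since $H$ is infinite and $SU(G)=1$, one has $SU(H)=1$, and Lascar's additivity of $SU$-rank (applied in a sufficiently saturated elementary extension and transferred back to $G$, as finite index is absolute) forces $[G:H]<\infty$.

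Next I would replace $H$ by its normal core $N=\bigcap_{g\in G} gHg^{-1}$. Since $[G:H]$ is finite, so is $[G:N_G(H)]$, and hence the core is a \emph{finite} intersection of conjugates of $H$; in particular $N$ is a definable normal subgroup of $G$ of finite index. For every $g\in G$, the derived subgroup of $gHg^{-1}$ equals $gFg^{-1}$ and is finite, and the inclusion $N\subseteq gHg^{-1}$ yields $N'\subseteq gFg^{-1}$. Therefore $N'\subseteq \bigcap_{g\in G} gFg^{-1}$ is a finite subgroup of $G$, characteristic in $N$ and so normal in $G$. Setting $K=N'$, the quotient $N/K$ is abelian of finite index in $G/K$, so $G/K$ is abelian-by-finite and $G$ itself is finite-by-abelian-by-finite.

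The only delicate point is the finiteness of $[G:H]$, which uses $SU(G)=1$ in an essential way: the Lascar inequality $SU(G)\geq SU(H)+SU(G/H)$ in a supersimple context, combined with $SU(H)=SU(G)=1$, gives $SU(G/H)=0$ and hence finitely many cosets. Everything else is routine group theory: normal cores of definable finite-index subgroups remain definable and of finite index; the derived subgroup is monotone along inclusions; and exhibiting a finite normal subgroup $K$ of $G$ whose quotient contains an abelian subgroup of finite index is precisely the statement that $G$ is finite-by-abelian-by-finite.
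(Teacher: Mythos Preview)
Your argument is correct and is precisely the intended one: the paper states this corollary without proof, immediately after Corollary~\ref{SS}, because it is meant to follow from that result together with the standard fact that in a supersimple group of $SU$-rank~$1$ every infinite definable subgroup has finite index. One small simplification: once you pass to the normal core $N\leq H$, the inclusion already gives $N'\leq H'=F$, so $N'$ is finite directly; the detour through $\bigcap_g gFg^{-1}$ is correct but unnecessary.
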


As noticed by Aldama in his thesis \cite{Ald}, Shelah's result concerning abelian subsets of a dependent group extends to a nilpotent subset of a dependent group. Actually Aldama also shows that in a dependent group $G$ any solvable group $A$ is surrounded by a definable solvable group of same derived length, provided that $A$ be normal in~$G$. We are interested in analogues of these results in the context of a group with simple theory. We propose the following definition~:

\begin{defn}A group $G$ is \emph{almost solvable of class~$n$} for some natural number~$n$, if there is a decreasing sequence of subgroups $G_i$ such that $$G_0=G\trianglerighteq G'\trianglerighteq G_1\trianglerighteq G'_1\trianglerighteq \dots \trianglerighteq G'_n \trianglerighteq G_{n+1}=\{1\}$$ and such that the index $[G'_i:G_{i+1}]$ is finite, with $G_i$ being normal in $G$ for all $i$. We call the sequence $G_i$ an \emph{almost derived series}.\end{defn}

An almost solvable group of class~zero is a finite-by-abelian group. We may write $H\underset{f}\leq G$ to mean that $H \leq G$ and~$[G:H]$ is finite.

%\begin{lem}\label{norm}Let $G$ be a group in a simple theory, and~$N,N_1$ two definable subgroups such that $G\trianglerighteq N\geq N_1$ and the index $[N:N_1]$ is finite. Then there is a definable subgroup $N_2$ of~$N$ such that $[N:N_2]$ is finite and~$G\trianglerighteq N_2$.\end{lem}

%\begin{proof}Put $\mathfrak H=\{N_1^g:g\in G\}$. As $N_1$ is normal in $N$, the members of~$\mathfrak H$ are pairwise commensurable subgroups of~$N$. By Fact~\ref{Sch}, there is a normal subgroup of~$G$.\end{proof}

\begin{cor}In a group with simple theory, let $A$ be a solvable subgroup of derived length~$n$. If $A$ is a normal subgroup, There is an $A$-definable almost solvable group of class~$n$ containing $A$ such that the members of the almost derived series are $A$-definable.\end{cor}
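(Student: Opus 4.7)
The proof proceeds by induction on the derived length $n\geq 1$ of~$A$, proving the stronger statement that the surrounding group $H$ and each member of its almost derived series can be chosen not only $A$-definable but also normal in~$G$. As a preliminary, I would establish a \emph{normal version} of Proposition~\ref{absimp}: for any abelian $A\trianglelefteq G$, there is an $A$-definable, $G$-normal, finite-by-abelian subgroup $\tilde H\supseteq A$. Following the proof of Proposition~\ref{absimp}, take $H_0=\bigcap_{i=1}^k C_G(a_i)$, a finite intersection of centralisers of elements of~$A$ minimal up to finite index. Since $A$ is $G$-normal, every conjugate $H_0^g$ is again such an intersection, and Fact~\ref{dccs} forces the family $\{H_0^g:g\in G\}$ to be uniformly commensurable. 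Schlichting's Theorem~\ref{Sch} then yields a $G$-normal, $A$-definable subgroup $\hat H$ commensurable with~$H_0$. Replacing $\hat H$ by $M=\hat H\cdot A$ (a subgroup since $A$ normalises $\hat H$, still $G$-normal, $A$-definable, commensurable with~$H_0$, and now containing $A$), every $a\in A$ has finite-index centraliser in~$M$; compactness in the simple theory bounds these indices uniformly by some~$N_0$. The set $\tilde H=\{x\in M:[M:C_M(x)]\leq N_0\}$ is then a definable, characteristic-in-$M$ (hence $G$-normal) subgroup of $M$ containing~$A$, with uniformly bounded conjugacy classes, hence finite-by-abelian by B.~H.~Neumann's theorem.

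The base case $n=1$ (so $A$ is abelian) follows immediately from the normal version: the resulting $H$ has almost derived series $H\trianglerighteq H'\trianglerighteq 1\trianglerighteq 1\trianglerighteq 1$, exhibiting it as almost solvable of class~$0$, hence of class~$1$. For the inductive step $n\geq 2$, the derived subgroup $A'$ is $G$-normal of derived length $n-1$; the induction hypothesis supplies an $A$-definable, $G$-normal almost solvable group $K=K_0\trianglerighteq K_0'\trianglerighteq K_1\trianglerighteq\cdots\trianglerighteq K_{n-1}'\trianglerighteq K_n=\{1\}$ of class $n-1$ containing~$A'$, with every $K_i,K_i'$ being $A$-definable and $G$-normal. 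The quotient $\bar G=G/K$ has simple theory, and $\bar A=AK/K$ is abelian and normal in~$\bar G$; applying the normal Proposition in $\bar G$ and taking the preimage in $G$ yields an $A$-definable, $G$-normal subgroup $H\supseteq A$ with $(H/K)'=H'K/K$ finite, so that $[H':H'\cap K]$ is finite.

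The almost derived series for $H$ is then built by setting $H_0=H$, $H_1=H'\cap K$, and recursively $H_{i+1}=H_i'\cap K_i$ for $1\leq i\leq n-1$, with $H_{n+1}=\{1\}$. A short induction on $i$ shows $H_i\subseteq K_{i-1}$ for $i\geq 1$, whence $H_i'\subseteq K_{i-1}'$; therefore $H_i'/(H_i'\cap K_i)$ embeds into $K_{i-1}'/K_i$, which is finite by hypothesis, giving $[H_i':H_{i+1}]$ finite. At the final step, $H_n'\subseteq K_{n-1}'$ is itself finite since $K_n=\{1\}$, so $[H_n':H_{n+1}]=|H_n'|$ is finite. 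Each $H_i$ and $H_i'$ is $G$-normal (as an intersection of $G$-normals, or derived thereof) and $A$-definable.

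The main obstacle is the normal version of Proposition~\ref{absimp}, ensuring $\tilde H$ both contains $A$ and remains $G$-normal and finite-by-abelian; this is handled by the detour through $M=\hat H\cdot A$ together with the uniformly bounded almost centre produced by the simple-theory chain condition. A secondary technical point is the $A$-definability of the derived subgroups $H_i'$ themselves, which in a group with simple theory one deduces from Fact~\ref{dccs} applied to the commutator width of the $A$-definable groups $H_i$.
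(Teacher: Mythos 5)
Your overall strategy --- strengthening the induction hypothesis to full $G$-normality, passing to the quotient $G/K$, applying a normalised version of Proposition~\ref{absimp} there, and pulling back --- is a genuinely different and in several respects cleaner route than the paper's (which keeps only $A$-invariance and uses the Kaplan--Scanlon--Wagner device of intersecting $A$-conjugates of the product $P=G_0\times\cdots\times G_n$ inside a finite power of the group, then applying Schlichting and projecting). But there is a genuine gap at the point where the statement actually bites: the members of your almost derived series are $H_{i+1}=H_i'\cap K_i$, and these are \emph{not} $A$-definable, because the derived subgroup of a definable group in a simple theory need not be definable (it is in general only a countable union of definable sets). Your proposed repair --- deducing definability of $H_i'$ from Fact~\ref{dccs} ``applied to the commutator width'' --- does not work: the chain condition of Fact~\ref{dccs} concerns uniformly definable families of subgroups and gives no bound on commutator width and no definability of $H_i'$; this fails already in stable groups. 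Since the corollary explicitly demands that the members of the almost derived series be $A$-definable, this is not a technical footnote but the central difficulty. It is exactly what the paper's product construction is engineered to avoid: there the new series members are the coordinate projections $P_k=\pi_k(P_A)$ of a single $A$-definable group $P_A$, so they are definable by construction, and the finite-index relations $P_i'\trianglerighteq_f P_{i+1}$ are inherited from commensurability with the old series rather than manufactured by intersecting with derived subgroups.

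Two secondary points. First, in your normal version of Proposition~\ref{absimp}, the set $\tilde H=\{x\in M:[M:C_M(x)]\leq N_0\}$ is not obviously a subgroup (a product of two such elements a priori only has centraliser of index at most $N_0^2$); you should instead take the almost centre $Z^*(M)=\{x\in M:[M:C_M(x)]<\infty\}$, which is a characteristic subgroup, show it is type-definable via the $D(-,\varphi,k)$-rank characterisation as in the paper's proof of Proposition~\ref{absimp}, and only then invoke compactness to bound the indices uniformly and conclude definability; the same compactness step is what you need to justify the assertion that the conjugates $H_0^g$ are \emph{uniformly} commensurable (pairwise commensurability does follow from minimality of $H_0$ up to finite index, but uniformity requires the type-definable almost normaliser argument). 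Second, your indexing is off by one relative to the paper's convention (the paper takes ``class $0$'' to mean finite-by-abelian and starts the induction at $n=0$ with Proposition~\ref{absimp}); this is harmless but should be aligned with the definition of almost solvable given in the text.
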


\begin{proof}Let us show it by induction on the derived length~$n$. Without loss of generality, we may work in a monster model $\mathfrak C$ extending the ambient group. When $n$~equals~$0$, this is Proposition~\ref{absimp}. Suppose that the result holds until~$n-1$. By induction hypothesis, there is an $A$-definable almost solvable group $G$ of derived length~$n-1$ containing $A'$ with an $A$-definable almost derived series, meaning that there are $A$-definable normal subgroups $G_0,\dots, G_n$ of~$G$ such that $$G=G_0\trianglerighteq G'\underset{f}\trianglerighteq G_1\trianglerighteq\cdots\trianglerighteq G_{n-1}'\underset{f}\trianglerighteq G_n=\{1\}$$ We shall now use an argument of Wagner in \cite{KSW}. Put $P=G_0\times G_1\cdots\times G_n\leq \mathfrak C\times\cdots\times \mathfrak C$. By Fact~\ref{dccs} there is a finite intersection $H$ of $A$-conjugates of~$P$ which is minimal up to finite index. Let us write $\mathfrak{H}$ for the set of $A$-conjugates of~$H$. We claim that the elements of $\mathfrak{H}$ are uniformly commensurable. To see that, we consider the \emph{almost normaliser} $\{g\in \mathfrak C :\textit{ $H^g$ and $H$ are commensurable}\}$ of $H$ in $\mathfrak C$. We write it $N^*_{\mathfrak C}(H)$. By~\cite[Lemma 4.1.15]{WS}, we have~: $$N^*_{\mathfrak C}(H)=\{g\in \mathfrak C:D_\mathfrak{C}(H\cap H^g,\varphi,k)\geq D_\mathfrak{C}(H,\varphi,k),\varphi \ \text{formula},k\ \text{natural number}\}$$ It follows from~\cite[Remark 4.1.5]{WS} that $N^*_{\mathfrak C}(H)$ is an $A$-type-definable group. By compactness and saturation, two $N^*_{\mathfrak C}(H)$-conjugates of $H$ are uniformly commensurable. $N^*_{\mathfrak C}(H)$ is in fact a definable group. As $N^*_{\mathfrak C}(H)$ contains $A$, the elements of $\mathfrak{H}$ are uniformly commensurable. We may now apply Fact~\ref{Sch}, and be able to find an $A$-definable group $P_A\leq\mathfrak C\times\cdots\times \mathfrak C$ commensurable with $H$ and invariant by conjugation under elements of~$A$. We enumerate the coordinates of $P_A$ from $0$ to $n$, we write $\pi_k$ the projection on the $k$th coordinate and put $P_k=\pi_k(P_A)$ for every $k$ in $\{0,\dots,n\}$. %$N_0,N_1,\dots,N_n$ such that $N=N_0\times\cdots\times N_n$ is
As $P_A$ is a finite extension of a finite intersection $I$ of elements in $\mathfrak H$, the group $P_0$ contains $A'$. We still have $$P_0\trianglerighteq P_0'\underset{f}\trianglerighteq P_1\trianglerighteq\cdots\trianglerighteq P_{n-1}'\underset{f}\trianglerighteq P_n=\{1\}$$ %with the additional property that $A$ normalises every $N_i$.
We claim that every $P_k$ is again normal in $P_0$. 
%recall from Remark~\ref{N} that $P_A$ is defined as follows $$P_A=\bigcap_{\underset{|H^a I:H^a|=m(I)}{a\in A}}\bigcap_{i\in I}(H^aI)^i$$ Then for every $k$ in $\{n,\dots,n\}$ we have $$P_k=\pi_k(P_A)=\bigcap_{\underset{|H^a I:H^a|=m(I)}{a\in A}}\bigcap_{i\in I}(\pi_k(H)^a\pi_k(I))^{\pi_k(i)}$$
As $G_0$ normalises $P$, the group $\pi_0(H)$ normalises $H$. As $A$ is a normal subgroup of $\mathfrak{C}$, it follows that conjugations by elements from $\pi_0(H)$ stabilise $\mathfrak{H}$ setwise. By Fact~\ref{Sch}, $\pi_0(H)$ normalises $P_A$. Let $a$ be in $A$. Similarly, $\pi_0(H^a)$ normalises $P_A$. As $P_0\subset \pi_0(\mathfrak{H})^4$ by Fact~\ref{Sch}, $P_0$ normalises $P_A$ hence $P_k$ for every $k$. 
%As $G_k\trianglelefteq G_0$, it follows one also has $\pi_k(I)\trianglelefteq\pi_0(I)$. Let $x$ be in $P_0$ and $y$ in $P_k$. Note that the inclusion $\pi_k(I)\subset P_k\subset P_0$ hold. So, for every $a\in A$ and every $k$ in $\pi_k(I)$ we have $x^{-k}=g^a i$ and $y^{ki^{-1}}={h}^aj$ for some $g,h$ in $\pi_0(H)\times \pi_k(H)$ and some $i,j$ in $I_0\times I_k$. So $$(x^{-1}yx)^k=(g^a i)(h^aj)^{}({i}^{-1}g^{-a})=(h^aj)^{i}$$
%IF $\pi_0( I)$ normalises $A$???, it follows that $\pi_k(P_A)\trianglelefteq \pi_0(P_A)$, that is $P_k\trianglelefteq P_0$.
Now $P_0$ contains $A'$ so the the group $P_0A/P_0$ is abelian. According to Proposition~\ref{absimp}, there is an $A$-definable group $M$ such that $$P_0A/P_0\leq M/P_0\leq \big(\bigcap_{i=0}^nN_G(P_i)\big)/P_0$$ where $M'/P_0$ is finite. Thus, $M$ is the desired almost solvable group of derived length~$n$.\end{proof}

\begin{Q*}We may define a group $G$ to be \emph{almost nilpotent of class~$n$} if there exists a decreasing sequence of subgroups $G_i$ such that $$G_0=G\trianglerighteq [G,G_0]\trianglerighteq G_1\trianglerighteq [G,G_1]\trianglerighteq G_2\trianglerighteq \dots \trianglerighteq [G,G_n]\trianglerighteq G_{n+1}=\{1\}$$ and such that the index $[[G,G_i]:G_{i+1}]$ is finite and~$G_i$ is normal in $G$ for all $i$. In a group with simple theory, is any nilpotent subgroup of class~$n$ contained in a definable almost nilpotent group of class~$n$?\end{Q*}

\addcontentsline{toc}{section}{References}


\begin{thebibliography}{9}

\bibitem{Ald} Ricardo de Aldama, \emph{Cha\^ines et D\'ependance}, Th\`ese de doctorat, Lyon, 2009.

\bibitem{Bae} Reinhold Baer, \emph{The subgroup of the Elements of Finite Order of an Abelian Group}, The Annals of Mathematics, Second Series \textbf{37}, 4, 766--781, 1936.

\bibitem{BCM} Walter Baur, Gregory Cherlin and Angus Macintyre, \textit{Totally categorical groups and rings}, Journal of Algebra \textbf{57}, 2, 407--440, 1979.

\bibitem{BL} Chantal Berline and Daniel Lascar, \textit{Superstable groups}, Annals of Pure and Applied Logic \textbf{30}, 1--43, 1986.

\bibitem{BN} Alexandre Borovik and Ali Nesin, Groups of finite Morley rank, Oxford university press, 1994.

\bibitem{Che1} Gregory Cherlin, \textit{Groups of small Morley rank}, Annals of Mathematical Logic \textbf{17}, 1--28, 1979.

%\bibitem[CS]{CS} G. Cherlin and S. Shelah, \textit{Superstable fields and groups}, Annals of Mathematical Logic 18, 1980.
 
%\bibitem[Coh]{Co} P.M. Cohn, Skew fields constructions, Cambridge University Press, 1977.

\bibitem
{EW} David M. Evans and Frank O. Wagner, \textit{Supersimple $\omega$-categorical groups and theories}, The Journal of Symbolic Logic \textbf{65}, 2, 767--776, 2000.

\bibitem
{Fel} Ulrich Felgner, \textit{On $\aleph_0$-categorical extra-special $p$-groups}. Logique et Analyse \textbf{71-72}, 407-428, 1975.

\bibitem
{Fuc} L\'aszl\'o  Fuchs, Infinite abelian groups, Academic press, 1970.

\bibitem{HK} Philip Hall and C. R. Kulatilaka, \textit{A Property of Locally Finite Groups}, Journal of the London Mathematical Society \textbf{39}, 235--239, 1964.

\bibitem{KSW} Itay Kaplan, Thomas Scanlon and Frank O. Wagner \textit{Artin-Schreier extensions in dependent and simple fields}, to be published.

%\bibitem[Poi]{Poi} B. Poizat, Groupes Stables, Nur Al-Mantiq Wal-Ma'rifah, 1987.

\bibitem{Mor} Michael Morley, \emph{Categoricity in power}, Transactions of the American Mathematical Society \textbf{114}, 2, 514--538, 1965.

\bibitem{Nes} Ali Nesin, \textit{Poly-separated and $\omega$-stable nilpotent groups}, The Journal of Symbolic Logic \textbf{56}, 2, 694--699, 1991.

\bibitem{Neu} Bernhard H. Neumann, \textit{Groups covered by permutable subsets}, Journal of the London Mathe\-ma\-tical Society \textbf{29}, 236--248, 1954.

\bibitem{Plo} Jacob M. Plotkin, \textit{ZF and locally finite groups}, Zeitschrift f\"ur mathematische Logik und Grundlagen der Mathematik \textbf{27}, 375--379, 1981.

\bibitem{Poi2} Bruno Poizat, \textit{Groupes Stables}, Nur Al-Mantiq Wal-Ma'rifah, 1987.

\bibitem{dminimal} Bruno Poizat, \textit{Quelques tentatives de d\'efinir une notion g\'en\'erale de groupes et de corps de dimension un et de d\'eterminer leurs propri\'et\'es alg\'ebriques}, Confluentes Mathematici, \`a para\^itre, 2009.

\bibitem{Cantor} Bruno Poizat, \textit{Groupes de petit rang de Cantor}, The Journal of Symbolic Logic, \`a para\^itre.

\bibitem{Pun} Vera Puninskaya, \textit{Vaught's conjecture}, Journal of Mathematical Sciences \textbf{109}, 3, 1649--1668, 2002. 

\bibitem{Rei} Joachim Reineke, \textit{Minimale Gruppen}, Zeitschrift f\"ur mathematische Logik und Grundlagen der Mathematik \textbf{21}, 357--359, 1975.

\bibitem{Sch} G\"unter Schlichting, \textit{Operationen mit periodischen Stabilisatoren}, Archiv der Ma\-te\-ma\-tik \textbf{34}, 97--99, Basel, 1980.

\bibitem{She} Saharon Shelah, \textit{Dependent first order theories, continued}, (Shelah 783), to be published in the Israel Journal of Mathematics.

\bibitem{W2} Frank O. Wagner, \textit{Small stable groups and generics}, The Journal of Symbolic Logic \textbf{56}, 1026--1037, 1991.

\bibitem{WQ} Frank O. Wagner, \textit{Quasi-endomorphisms in small stable groups}, The Journal of Symbolic Logic \textbf{58}, 1044--1051, 1993.

%\bibitem{WStable} Frank O. Wagner, Stable groups, Cambridge University Press, 1997.

\bibitem{WS} Frank O. Wagner, Simple Theories, Kluwer Academic Publishers, Dordrecht, NL, 2000.

\end{thebibliography}
\end{document}